\newcommand{\D}{{\partial}} 
\theoremstyle{plain}
\newtheorem{theorem}{Theorem}[section]
\newtheorem{cor}[theorem]{Corollary}
\newtheorem{lemma}[theorem]{Lemma}
\newtheorem*{namedtheorem}{\theoremname}
\newcommand{\theoremname}{testing}
\theoremstyle{definition}
\author{Alexander Coward}
\title[Crossing changes and circular Heegaard splittings]
{Crossing changes and circular Heegaard splittings}
\begin{document}

\maketitle

\vspace{-24pt}

\begin{abstract}We use technology from sutured manifold theory and the theory of Heegaard splittings to relate genus reducing crossing changes on knots in $S^3$ to twists on surfaces arising in circular Heegaard splittings for knot complements. In a separate paper, currently in preparation, we  prove that these circular Heegaard splittings may be searched for algorithmically, and together our results imply that an algorithm to detect when two hyperbolic or fibered knots of different genus are related by a crossing change would follow from an algorithm to determine whether two compact oriented surfaces in $S^3$ are related by a single twist. 

\end{abstract}

\section{Introduction}

Let $F$ be a compact surface with boundary, embedded in a $3$-manifold $M$. Let $D$ be a disk embedded in $M$ so that $D \cap F$ is a single arc $\alpha$ properly embedded in $F$ and embedded in the interior of $D$. Let $C = \partial D$. If we perform $\pm 1$ Dehn surgery along $C$ then $F$ is transformed to a new surface $F'$ which  we say is obtained from $F$ by a \emph{twist along $\alpha$}. See Figure \ref{twisting}. 

\begin{figure}[h!]
\centering \label{twisting}
\psfrag{S}[][]{$F$}
\psfrag{a}[][]{$\alpha$}
\psfrag{p}[][]{$\pm 1$}
\psfrag{F}[][]{$F'$}
\includegraphics[width=0.4\textwidth]{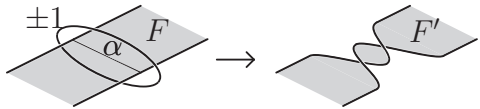}
\caption{Performing a twist along $\alpha$} 
\end{figure} 

Twisting is a very natural operation  one can perform on a surface. Since performing a twist on a surface applies a crossing change to its boundary, understanding twists on surfaces is clearly related to understanding crossing changes on knots. In fact, this statement can be made very strong, by means of the following theorem.

\begin{theorem} \label{maint}
Let $K$ and $K'$ be oriented knots in $S^3$, both either hyperbolic or fibered, with $g(K) > g(K')$. Then there are finite lists of oriented spanning surfaces $\{S_1,\ldots, S_n\}$ for $K$ and $\{S_1',\ldots, S_{n'}'\}$ for $K'$
 such that if $K$ and $K'$ are related by a single crossing change, then some $S_i\in\{S_1,\ldots, S_n\}$ and some $S'_{i'}\in\{S_1',\ldots, S_{n'}'\}$ are related by a single twist, up to an ambient isotopy of $S^3$.
Furthermore, there is an algorithm that will take diagrams
for $K$ and $K'$ as input, and output such finite lists of spanning surfaces.

\end{theorem}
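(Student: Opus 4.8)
The plan is to realise the crossing change as a single twist on a minimal genus Seifert surface for $K$, and then to record the twisting data as a circular Heegaard splitting whose complexity is bounded in terms of $g(K)$ and $g(K')$; the finitely many such splittings, enumerable by the companion paper, then yield the two lists.

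First I would pass from the combinatorial picture to a surgery description. A single crossing change taking $K$ to $K'$ is realised by performing $(\pm1)$--Dehn surgery on an unknotted \emph{crossing circle} $C=\partial D$, where $D$ is an embedded disk meeting $K$ transversally in two points. One cannot assume a priori that $\operatorname{lk}(C,K)=0$: a genus--reducing change may well come from a coherently oriented (linking number $\pm2$) crossing, as the standard unknotting of the trefoil shows. So the substantive content is exactly to reinterpret a genus--reducing crossing change, of whatever linking type, as a twist along an arc in a Seifert surface, for a twist along an arc $\alpha\subset S$ is by definition surgery on $C=\partial D$ with $D\cap S=\alpha$, and this forces the two feet of $\alpha$ to meet $D$ with opposite orientations.

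The heart of the argument, and the step I expect to be the main obstacle, is the following. Given the crossing disk $D$, I would produce a minimal genus Seifert surface $S$ for $K$ that can be isotoped so that $D\cap S$ is a single arc $\alpha$ properly embedded in $S$ and unknotted in $D$. Here sutured manifold theory enters: decomposing $S^3\setminus N(K)$ along a taut (Thurston norm minimising) Seifert surface and analysing how the twisting disk $D$ sits with respect to this decomposition, I would use a sweepout / thin position argument in the spirit of Rubinstein--Scharlemann to remove, by isotopy and innermost--disk / outermost--arc exchanges, all closed curves and all but one arc of $D\cap S$, using incompressibility of $S$ and the fact that $(\pm1)$--surgery on $C$ strictly lowers the genus to rule out the configurations that would otherwise obstruct the simplification. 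The hypothesis that $K$ (and $K'$) is hyperbolic or fibered is what prevents essential annuli and tori from producing unsimplifiable intersection patterns, and in the fibered case reduces the analysis to the monodromy of the unique fiber surface. Once $D\cap S=\alpha$, twisting $S$ along $\alpha$ produces a Seifert surface $S'$ for $K'$, homeomorphic to $S$ and hence of genus $g(K)>g(K')$.

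It remains to make both surfaces members of computable finite lists. I would regard the triple $(S,\alpha,D)$, and its twisted image giving $(S',\alpha,D)$, as the data of circular Heegaard splittings of $S^3\setminus N(K)$ and of $S^3\setminus N(K')$: the Seifert surface is the thin (fiber) surface and the twisting region contributes the bounded handle data, so both splittings have complexity bounded solely in terms of $g(K)$, a bound readable from the input diagrams. Because $K$ and $K'$ are hyperbolic or fibered, there are only finitely many circular Heegaard splittings of each exterior up to isotopy below a fixed complexity, and by the companion paper these may be searched for algorithmically starting from diagrams for $K$ and $K'$. Taking $\{S_1,\dots,S_n\}$ and $\{S_1',\dots,S_{n'}'\}$ to be the oriented thin surfaces occurring in these bounded--complexity splittings gives the required lists; by construction the surfaces $S$ and $S'$ found above occur among them, and they are related by the single twist along $\alpha$, up to ambient isotopy of $S^3$. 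The only place genuinely new work is needed is the simplification lemma of the previous paragraph; the enumeration and finiteness are then formal consequences of the companion paper together with the complexity bound.
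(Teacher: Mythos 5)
The genuine gap is in your final paragraph, which is precisely where the paper has to do all of its real work. After the twist you hold a Seifert surface $S'$ for $K'$ of genus $g(K)>g(K')$, which is necessarily compressible, and you propose to place it on a computable list by declaring it to be the \emph{thin} surface of a circular Heegaard splitting of the exterior of $K'$ with ``complexity bounded solely in terms of $g(K)$.'' Neither half of this is justified. Handle number is not controlled by genus: for a hyperbolic non-fibered knot there is no function of $g(K)$ bounding the handle number of any circular Heegaard splitting over a minimal genus Seifert surface (your implicit picture, thin surface plus one tube from the twisting region, is a circular Heegaard splitting only in the fibered case), and indeed the paper obtains its bound $m$ as an \emph{output} of the algorithm (the maximal handle number of splittings it constructs over the enumerated minimal genus Seifert surfaces of $K$), not as a function of $g(K)$. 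More seriously, you give no construction showing that the compressible surface $S'$ arises as the thin surface of \emph{any} circular Heegaard splitting with an a priori computable handle number bound, and the paper's own analysis suggests it naturally does not: in the Morse-theoretic surgery of Theorems \ref{fiberswitch} and \ref{generalswap}, the untwisted surface $F_{\textrm{untwist}}$ appears only as an intermediate level surface inside a compression body, separated from the thin level by an index-$2$ point and from the thick level by index-$1$ points that cannot simply be reordered to make $F_{\textrm{untwist}}$ a negative boundary. The paper's resolution is structurally different from yours: the compressible twisted surface is treated as a \emph{thick} surface, the lists consist of thick surfaces, the twist relates the thick surfaces $S$ and $S'$ of splittings $(F,S)$ for $K$ and $(F',S')$ for $K'$, the new thin surface $F'$ is not $F_{\textrm{untwist}}$ but a compression of it (of genus $g(K)-1$), and the handle number bound is $m+1$. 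Your plan to enumerate thin surfaces of thin-genus $g(K)$ would miss the surface you constructed unless you supply both a new existence argument and a computable bound; this missing step is the theorem (Theorem \ref{generalswap}) occupying the bulk of the paper.

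Two secondary points. The ``heart of the argument'' you anticipate---isotoping a minimal genus Seifert surface so the crossing disk meets it in a single arc---is exactly Scharlemann--Thompson's theorem (Theorem \ref{stplumb}), which the paper cites rather than reproves; your sketch of it is in any case too vague to check, and you misattribute the role of the hyperbolicity/fiberedness hypothesis, which is irrelevant to that step (Scharlemann--Thompson holds for all knots) and is needed only for the finiteness and algorithmicity of the enumerations (Theorem \ref{searchy}). Finally, your linking-number discussion is internally inconsistent: in this setting a crossing circle bounds a disk meeting $K$ in two points of opposite sign, so $\operatorname{lk}(C,K)=0$ by definition, and your own mechanism (a twist along $\alpha=D\cap S$) forces opposite signs, as you note; so your argument, like the paper's, never treats the coherent ($\operatorname{lk}=\pm2$) case you declare to be in scope, and the trefoil example does not show what you want, since the trefoil is also unknotted by an $\operatorname{lk}=0$ crossing change (untwisting a Hopf band in its fiber).
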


Thoerem \ref{maint} arose from the efforts of the author to construct an algorithm to determine whether a knot in $S^3$ has unknotting number one. The following corollary highlights how Theorem \ref{maint} might be used in this direction.

\begin{cor}
Suppose there is an algorithm to determine whether two compact oriented surfaces in $S^3$, each with a single boundary component, are related by a single twist, up to ambient isotopy of $S^3$. Then there is an algorithm to determine if two given knots in $S^3$ of different genus and both either hyperbolic or fibered are related by a single crossing change. In particular there is an algorithm to determine whether a given knot in $S^3$ has unknotting number one.
\end{cor}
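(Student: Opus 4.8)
The plan is to use Theorem~\ref{maint} to convert the hypothetical twist-detection algorithm into a crossing-change-detection algorithm, being careful about the direction of the implication in Theorem~\ref{maint}. Given $K$ and $K'$ of different genus, each either hyperbolic or fibered, I would first compute their Seifert genera (possible via normal surface theory) and relabel so that $g(K) > g(K')$. I would then feed diagrams of $K$ and $K'$ into the algorithm of Theorem~\ref{maint} to produce the finite lists of oriented spanning surfaces $\{S_1,\ldots,S_n\}$ and $\{S_1',\ldots,S_{n'}'\}$. Since each spanning surface of a knot has a single boundary component, each pair $(S_i,S_{i'}')$ is a legitimate input for the assumed subroutine, so for every one of the finitely many pairs I would run that subroutine to decide whether $S_i$ and $S_{i'}'$ are related by a single twist up to ambient isotopy, and I would declare $K$ and $K'$ to be related by a crossing change precisely when at least one pair is.

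Correctness rests on a two-sided equivalence. In one direction, Theorem~\ref{maint} guarantees that if $K$ and $K'$ are related by a single crossing change then some pair $(S_i,S_{i'}')$ is related by a single twist, so whenever the true answer is affirmative the procedure locates a twisting pair and reports affirmatively. The converse is supplied by the elementary observation recorded in the introduction: a twist along a properly embedded arc $\alpha$ in a spanning surface $F$ performs a crossing change on $\partial F$. Thus if an ambient isotopy carries $S_i$ to a surface that is obtained from $S_{i'}'$ by a single twist, then the same ambient isotopy and twist carry $K=\partial S_i$ to a knot isotopic to $K'=\partial S_{i'}'$ by a single crossing change. Hence the procedure reports affirmatively if and only if $K$ and $K'$ are genuinely related by a crossing change, and it always terminates because all lists and all subroutine calls are finite.

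For the final assertion I would specialize to $K'=U$, the unknot, which is fibered and has genus $0$. A knot $K$ has unknotting number one exactly when it is not the unknot yet is related to $U$ by a single crossing change; the first condition is decided by unknot recognition. For any nontrivial $K$ we have $g(K)\ge 1>0=g(U)$, so the genus hypothesis of the reduction is automatic, and it remains only to place $K$ in the hyperbolic-or-fibered class. Here I would use that the geometric type of $K$ is algorithmically determinable: a nontrivial knot is a torus knot, a satellite, or hyperbolic, and torus knots are fibered. The reduction then applies directly whenever $K$ is hyperbolic or fibered. The main obstacle I anticipate is exactly the remaining case of satellite knots that are neither fibered nor hyperbolic, which lie outside the hypotheses of Theorem~\ref{maint} and would require separate structural input; the rest of the argument is essentially immediate from Theorem~\ref{maint} together with the twist-induces-crossing-change observation, the only genuine care being in the converse direction, namely checking that a twist relating the surfaces descends to a single crossing change of exactly $K$ and $K'$ even after an ambient isotopy.
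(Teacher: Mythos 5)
Your reduction for the second sentence of the corollary is correct, and it is exactly the intended argument: the paper states this corollary without any proof, as an immediate consequence of Theorem \ref{maint}, and your two-sided correctness check is the right way to make that deduction precise. In particular you are right that the converse direction is the only point needing care, and the observation recorded in the paper's introduction --- that a twist along a properly embedded arc in a spanning surface effects a crossing change on its boundary --- closes the loop: if some $S_i$ is ambient isotopic to a surface obtained from $S'_{i'}$ by one twist, then $K=\partial S_i$ is isotopic to a knot obtained from $K'=\partial S'_{i'}$ by a single crossing change, so your procedure answers affirmatively exactly when it should.

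Your worry about the final assertion is also well placed, and it is worth stressing that it exposes a gap in the corollary \emph{as stated}, not a defect in your argument. Taking $K'$ to be the unknot (fibered, genus $0$) brings a nontrivial knot $K$ within the scope of Theorem \ref{maint} only when $K$ itself is hyperbolic or fibered, and this excludes knots that genuinely matter for the unknotting number one problem: the untwisted Whitehead double of a trefoil is a satellite (hence not hyperbolic), has trivial Alexander polynomial and genus $1$ (hence is not fibered), yet has unknotting number one, since changing one clasp crossing unknots it. So the class of satellite, non-fibered knots with unknotting number one is nonempty, and deciding unknotting number one for such knots requires input beyond Theorem \ref{maint} --- for instance an analysis, in the spirit of Scharlemann and Thompson, of how an unknotting crossing change interacts with companion tori. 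Since the paper offers no argument for this case, the honest conclusion is the one you reached: your proposal proves the second sentence of the corollary in full, while the third sentence is justified only for knots that are hyperbolic or fibered, and otherwise needs separate structural work.
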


The proof of Theorem  \ref{maint} has many ingredients. To begin with, a classic result of Scharlemann and Thompson \cite{sscross} using deep machinery from sutured manifold theory says that if a knot $K$ in $S^3$ admits a crossing change yielding a knot $K'$ of lower genus then the crossing change may be realized as untwisting a plumbed on Hopf band on some minimal genus Seifert surface $F$ for $K$, as shown in Figure \ref{dehopf}. 
\begin{figure}[h!]
\psfrag{C}[][]{$\pm 1$}
\psfrag{U}[][]{$K'$}
\psfrag{S}[][]{$F$}
\psfrag{F}[][]{$F'$}
\psfrag{K}[][]{$K$}
\centering
\includegraphics[width=0.7\textwidth]{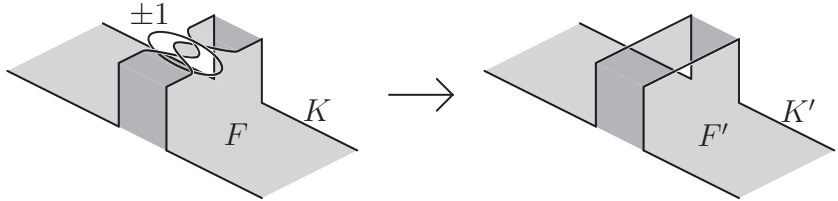}
\caption{Untwisting a plumbed on Hopf band} \label{dehopf}
\end{figure} 

Now, since performing a twist does not change the genus of a surface, the surface $F'$ resulting from untwisting the plumbed on Hopf band on $F$ is certainly not a minimal genus Seifert surface for $K'$. This can readily be seen since $F'$ admits an obvious compression disk. Indeed, if $K'$ is the unknot then $F'$ compresses all the way to a disk. This makes the surface $F'$ considerably more difficult to study than than the incompressible surface $F$. The central idea of this paper is to study highly compressible spanning surfaces for $K'$, such as $F'$, using ideas from another class of highly compressible surface: Heegaard surfaces.

There are essentially two stages in proving Theorem \ref{maint}. The first stage is to show that if two knots of different genus are related by a single crossing change, then this crossing change may be manifested as a twist relating two surfaces arising in certain circular Heegaard splittings for the two knots. The second stage is to algorithmically enumerate all possibilities for these circular Heegaard surfaces. The precise statement we require for the first stage is the following, whose terms are defined in Sections \ref{circpre} and \ref{plumb}. 

\begin{theorem}\label{maintwist}
Let $K$ and $K'$ be two knots in $S^3$. Let $F$ be a genus $n$ Seifert surface for $K$ that may be expressed as the plumbing of two surfaces, $X_1$ and $X_2$, where $X_1$ is a Hopf band, and suppose that untwisting $X_1$ yields a surface whose boundary is ambient isotopic to $K'$. Let $(F,S)$ be a circular Heegaard surface for $K$ with handle number $m$. Then $K'$ admits a circular Heegaard surface $(F',S')$ with thin-genus $n-1$, handle number $m+1$ and the property that $S$ and $S'$ are related by a single twist, up to ambient isotopy. \end{theorem}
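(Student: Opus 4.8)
The plan is to produce $(F',S')$ in three movements: first arrange the given circular Heegaard surface so that it is compatible with the plumbing region, then untwist the Hopf band by a single ambient surgery, and finally deplumb to drop the thin-genus while charging the cost to one extra handle.

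First I would localise everything near the plumbed-on Hopf band $X_1$. Following the Scharlemann--Thompson picture (Figure \ref{dehopf}), the untwisting is $\pm 1$ surgery on a curve $C=\partial D$, where $D$ is a disk meeting $F$ in exactly the core arc $\alpha$ of $X_1$, with $\alpha$ properly embedded in $F$ and embedded in the interior of $D$, as in the definition of a twist (Figure \ref{twisting}). The first step is to isotope the thick surface $S$ of the circular Heegaard splitting $(F,S)$ so that within a neighbourhood $N(D)$ of the twisting disk the surface $S$ is a single push-off copy of $F$; this is possible because the compression bodies are products near $F$ and the $1$-handles may be isotoped out of the ball $N(D)$, which meets $F$ in a single disk. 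After this normalisation $D\cap S$ is a single arc $\alpha'$ parallel to $\alpha$, so $D$ is simultaneously a twisting disk for $F$ along $\alpha$ and for $S$ along $\alpha'$.

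Second, perform the $\pm1$ surgery on $C$. As $C$ is unknotted this returns $S^3$, carrying $K$ to $K'$, the Seifert surface $F$ to the untwisted surface $\tilde F$ of genus $n$ with $\partial\tilde F$ ambient isotopic to $K'$, and $S$ to $S'$. Because $D\cap S=\alpha'$ is a single arc, $S'$ is obtained from $S$ by a single twist along $\alpha'$, which is precisely the required relation between $S$ and $S'$. The surgery is a homeomorphism of $S^3$ away from $C$, so it carries the two compression bodies of $(F,S)$ to compression bodies for $(\tilde F, S')$; hence $(\tilde F, S')$ is a circular Heegaard surface for $K'$ of thin-genus $n$ and handle number $m$, and since twisting a surface along an arc is a homeomorphism we have $g(S')=g(S)$.

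Finally, reduce the thin-genus. The untwisted band in $\tilde F$ is now a trivial $1$-handle on the Seifert surface, so $\tilde F$ deplumbs to a genus $n-1$ surface $F'$ for $K'$, and it is the complement of this $F'$ that we wish to split by $S'$. As $g(S')$ is unchanged while the thin-genus drops from $n$ to $n-1$, the definitions of Sections \ref{circpre} and \ref{plumb} then force the handle number to rise by exactly one, to $m+1$, provided the two sides of $S'$ in the complement of $K'$ cut along $F'$ are genuinely compression bodies. Establishing this last point is the main obstacle. The difficulty is that $\tilde F$, being the negative boundary of each compression body, is incompressible there, so the compressing disk coming from the trivial band is forced to cross $S'$, and one cannot simply compress inside a compression body. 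Instead I would absorb the product region of $\tilde F$ alongside the trivial band into the single adjacent compression body, showing by a cut-and-paste (amalgamation-type) argument that this converts the vanishing genus of $\tilde F$ into one new $1$-handle of that compression body while leaving $S'$ as its positive boundary. Carrying out this reorganisation, and checking that the output genuinely satisfies the axioms of a circular Heegaard surface of thin-genus $n-1$ and handle number $m+1$, is where the real work lies.
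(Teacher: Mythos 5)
Your identification of the target surfaces is correct ($S'$ should be the twist of $S$ along the crossing disk, and $F'$ a compression of the untwisted thin surface), but the pivotal claim in your second step is false, and the theorem cannot be reached from it. You assert that since the surgery ``is a homeomorphism of $S^3$ away from $C$,'' it carries the two compression bodies of $(F,S)$ to compression bodies, so that $(\tilde F, S')$ is a circular Heegaard surface for $K'$ of thin-genus $n$ and handle number $m$. The re-identification of the surgered manifold with $S^3$ is indeed a homeomorphism on the complement of the surgery solid torus, but the side of $F\cup S$ containing $C$ is exactly the piece whose interior the surgery modifies (a solid torus is removed and reglued), so its homeomorphism type need not be preserved, and in general it is not. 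A concrete counterexample: let $K$ be the trefoil, $F$ its fiber (the plumbing of two Hopf bands), and $(F,S)$ the handle number zero splitting in which $S$ is a parallel copy of $F$; untwisting one Hopf band produces the unknot $K'$. If $(\tilde F,S')$ were a circular Heegaard splitting of handle number $0$, both complementary pieces would be products (their thin and thick boundaries both have genus one), exhibiting the unknot exterior as a once-punctured-torus bundle over $S^1$; this is impossible, since the unknot exterior is a solid torus with fundamental group $\ZZ$. The statement of Theorem \ref{maintwist} itself signals the problem: the correct output has thin-genus $n-1$ and handle number $m+1$, and there is no intermediate circular Heegaard splitting of thin-genus $n$ and handle number $m$ along the way.

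Because of this, your third step --- which you candidly describe as ``where the real work lies'' --- is not a final bookkeeping adjustment but the entire content of the theorem, and moreover it is launched from a configuration that does not exist. The paper never forms $(\tilde F,S')$. Instead it works with the circular Morse function $f$ associated to $(F,S)$: it first uses a Haken-type lemma (Lemma \ref{cglemma}) to isotope the cores of all $1$-handles of both compression bodies out of the encapsulating ball of the Hopf band (this is also the justification your first step silently assumes when you say the handles ``may be isotoped out of the ball''), and then replaces $f$ inside that ball by an explicit local construction, as in Theorem \ref{fiberswitch}: the boundary compression that $f$ performs at time $t_1$ is replaced by a band attachment (one new index-$1$ critical point), and the band attachment at time $t_2$ is replaced by a boundary compression along $D_2$ (one new index-$2$ critical point). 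The compression-body structure of the resulting splitting $(F',S')$ is then manifest from the modified Morse function, with thin surface $F'$ a compression of $F_{\textrm{untwist}}$ (hence of genus $n-1$) and handle number $m+1$, and with $S'$ related to $S$ by a single twist. Some replacement for this explicit local surgery on the Morse function is what your sketch is missing; an ``amalgamation-type'' cut-and-paste starting from $(\tilde F,S')$ cannot supply it, because $(\tilde F,S')$ is not a splitting to begin with.
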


The second stage, that of enumerating circular Heegaard surfaces, is inspired by the following theorem of Lackenby \cite{lacksimple}, who built on work of Haken \cite{hakentheorie}, Rubinstein \cite{rubalmost}, Stocking \cite{stocking}, Scharlemann and Thompson \cite{untel}, Casson, Epstein and Penner \cite{ep} and others.

\begin{theorem}\label{lackheegaard}Let $M$ be a compact connected orientable simple 3-manifold with non-empty boundary. Then there is an algorithm to determine the Heegaard genus of $M$.
Moreover, for any given positive integer $n$, there is an algorithm to find all Heegaard
surfaces for $M$ with genus at most $n$ (up to ambient isotopy).
\end{theorem}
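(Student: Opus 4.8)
The plan is to reduce the enumeration of Heegaard surfaces to the algorithmic enumeration of normal and almost normal surfaces with respect to a fixed triangulation, and then to reconstruct Heegaard splittings by amalgamation. Since $M$ is simple with nonempty boundary, I would first equip it with a triangulation $\mathcal{T}$ --- for instance by truncating an ideal triangulation, whose existence and good combinatorial properties are guaranteed by the work of Casson, Epstein and Penner. Fixing such a $\mathcal{T}$ with a known number of tetrahedra immediately yields an a priori upper bound $g_0$ on the Heegaard genus of $M$, so that determining the Heegaard genus reduces to finding all Heegaard surfaces of genus at most $g_0$ and taking the minimum; thus both assertions follow from the enumeration claim.

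The heart of the argument is the following chain of reductions. First, by the thin position and untelescoping theory of Scharlemann and Thompson, any Heegaard splitting of $M$ of genus $g$ can be untelescoped into a \emph{generalized Heegaard splitting}: a collection of thin surfaces, each incompressible in $M$, separating submanifolds in each of which a thick surface is a strongly irreducible Heegaard surface. The original genus $g$ controls, via the amalgamation formula, the total complexity (sum of Euler characteristics) of all the thin and thick pieces. Next, I would invoke Haken's theorem to isotope each incompressible thin surface into normal form with respect to $\mathcal{T}$, and the Rubinstein--Stocking theorem to isotope each strongly irreducible thick surface into almost normal form.

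Once every piece is normal or almost normal, enumeration becomes a finite problem. Normal surfaces of bounded Euler characteristic are nonnegative integer combinations of the finitely many fundamental solutions of the normal surface matching equations, and these can be listed algorithmically; almost normal surfaces arise from a normal remainder together with a single exceptional piece (an octagon or a tube), so they too can be enumerated. Bounding the genus by $n$ bounds the complexity of every piece and --- crucially, using simplicity to rule out essential spheres, disks, annuli and tori --- bounds the number of thin surfaces, so there are only finitely many candidate tuples of pieces to consider. For each candidate tuple I would reassemble the pieces by amalgamation, test (again by normal surface algorithms) whether the thin surfaces are genuinely incompressible and the thick ones are genuinely strongly irreducible Heegaard surfaces of the appropriate submanifolds, and thereby decide whether the tuple yields an honest Heegaard surface of genus at most $n$. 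Reducing the resulting list up to ambient isotopy then completes the enumeration.

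I expect the main obstacle to be the reassembly step: ensuring that the combinatorially enumerated collections of normal and almost normal pieces can be algorithmically recognized as arising from an untelescoping and correctly amalgamated back into a Heegaard splitting, and that distinct isotopy classes are neither lost nor double-counted. Controlling the depth of the untelescoping --- that is, bounding the number of thin surfaces in terms of $n$ --- is precisely where the simplicity hypothesis does its essential work, since without it the thin incompressible surfaces could proliferate and the enumeration would fail to terminate.
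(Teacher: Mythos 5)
You should first note that the paper does not prove this statement at all: it is quoted as Lackenby's theorem \cite{lacksimple}, built on Haken, Rubinstein, Stocking, Scharlemann--Thompson and Casson--Epstein--Penner, so the comparison must be with Lackenby's cited proof. Your outline does follow the same broad architecture as that proof --- untelescope to a generalized Heegaard splitting with incompressible thin levels and strongly irreducible thick levels, normalize the thin surfaces (Haken) and almost-normalize the thick ones (Rubinstein--Stocking), enumerate candidates, and reassemble by amalgamation; the reassembly worry you flag at the end is resolved by the uniqueness of amalgamation, which is exactly Proposition 3.1 of \cite{lacksimple}, cited elsewhere in this paper.

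However, there is a genuine gap at the step ``once every piece is normal or almost normal, enumeration becomes a finite problem.'' Fundamental solutions of the matching equations give finitely many \emph{generators}, but surfaces of bounded Euler characteristic need not be fundamental, and in a general triangulation there are infinitely many normal surfaces of genus at most $n$ --- one can add vertex-linking or boundary-parallel summands, take normally parallel copies, or (absent simplicity) twist along essential annuli and tori to produce infinitely many isotopy classes. So listing fundamental surfaces does not enumerate all normal and almost normal surfaces of genus at most $n$ up to ambient isotopy, and your procedure is not yet an algorithm. You also misattribute the role of simplicity: the number of thin levels is bounded in terms of the genus by the Scharlemann--Thompson complexity count in untelescoping, with no simplicity needed. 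Where simplicity actually does its work in Lackenby's proof is (i) in guaranteeing, via Casson and Epstein--Penner, a \emph{partially flat angled ideal triangulation} --- not merely some truncated ideal triangulation, as in your first paragraph --- in which a combinatorial Gauss--Bonnet argument assigns every normal and almost normal surface an area proportional to $\abs{\chi}$, so that bounded genus forces the surface to consist of a bounded core together with controlled flat or normally parallel pieces, making the enumeration finite; and (ii) in excluding the essential tori and annuli whose twists would yield infinitely many isotopy classes of Heegaard surfaces of the same genus. Without the angled structure (or some substitute for it), neither the finiteness claim nor the termination of your enumeration is justified.
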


An analogous result, whose terms are defined in Section \ref{circpre},  holds for circular Heegaard surfaces for knots in $S^3$.

\begin{theorem}\label{searchy}Let $K$ be a knot in $S^3$ that is either hyperbolic or fibered. Then, up to ambient isotopy of $S^3$ keeping $K$ fixed throughout, there are finitely many circular Heegaard surfaces for $K$ with given thin-genus and given handle number. Furthermore, there is an algorithm to find these surfaces. 
\end{theorem}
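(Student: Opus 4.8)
The plan is to adapt the normal and almost normal surface machinery behind Theorem~\ref{lackheegaard} to the circular setting, replacing the single Heegaard surface of a manifold with the pair $(F,S)$ of thin and thick surfaces of a circular Heegaard splitting. First I would fix a triangulation $\mathcal{T}$ of the knot exterior $M = S^3 \setminus \mathrm{int}\,N(K)$; since $K$ is hyperbolic or fibered, $M$ is irreducible and its essential topology is rigid enough to control bounded-genus surfaces. The target is to show that, after an ambient isotopy fixing $K$, the thin surface $F$ may be taken normal and the thick surface $S$ almost normal with respect to $\mathcal{T}$, with combinatorial weight bounded in terms of the thin-genus $n$ and the handle number $m$. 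Granting such a bound, the enumeration follows from Haken's theory \cite{hakentheorie}: the normal and almost normal surfaces of bounded weight are finite in number and are computable as nonnegative integer combinations of the finitely many fundamental solutions of the matching equations for $\mathcal{T}$.

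The main step is to untelescope the circular Heegaard splitting. By a circular analogue of the Scharlemann--Thompson procedure \cite{untel}, the splitting $(F,S)$ should be convertible into a generalized circular splitting whose thick surfaces are strongly irreducible and whose thin surfaces are incompressible and boundary-incompressible, without changing thin-genus or increasing handle number, and so that the original $(F,S)$ is recovered from the generalized one by amalgamation. The point of this reduction is that the two classes of surfaces can then be normalized by the two classical theorems: the essential thin surfaces are isotopic to normal surfaces, and the strongly irreducible thick surfaces are isotopic to almost normal surfaces (Rubinstein, Stocking \cite{rubalmost,stocking}). Because amalgamation is determined by the combinatorial data of the compression bodies lying between consecutive thin surfaces, it then suffices to enumerate the finitely many normalized generalized splittings and the finitely many compatible amalgamations realizing the prescribed thin-genus and handle number.

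The hard part will be bounding the weight of these representatives, since a priori a surface of bounded genus can have arbitrarily large weight, leaving infinitely many normal surfaces to sift through. Here I would use that $K$ is hyperbolic or fibered in two ways. For the thin surfaces, the simplicity (atoroidality and anannularity) of $M$ should ensure that there are only finitely many isotopy classes of essential surfaces of bounded genus, so only finitely many candidates for $F$ and for the intermediate thin levels arise, each pinned to a specific fundamental normal surface. For the thick surfaces, once the adjacent thin surfaces are fixed, the strongly irreducible piece between them lies in a compression body of controlled Euler characteristic, and I expect its almost normal representative can be taken to be a fundamental almost normal surface, or a fundamental surface plus a bounded amount of tubing. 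Verifying that this bound survives the passage \emph{around} the circle, rather than along an interval as in the ordinary Heegaard case, is the crux and is where the circular structure genuinely departs from Lackenby's setting.

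An alternative route I would pursue in parallel is a direct reduction to Theorem~\ref{lackheegaard}. Enumerate the finitely many candidate thin surfaces $F$ as above, cut $M$ along each to obtain a manifold $M_F$ whose boundary contains two copies of $F$, and observe that a circular Heegaard surface $(F,S)$ is precisely a Heegaard surface of $M_F$ relative to these boundary copies, with genus bounded by $n$ and $m$; applying the enumeration of Theorem~\ref{lackheegaard} to $M_F$ would then produce all the required thick surfaces. The obstacle for this route is that $M_F$ need not be simple, since cutting along $F$ can create essential annuli running between the two copies of $F$, so one must either establish a relative version of Lackenby's theorem valid for such cut-open manifolds, or argue directly that the relevant relative Heegaard surfaces still admit almost normal representatives of bounded weight. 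Reconciling this with the circular amalgamation above, and confirming that every isotopy class is captured exactly once, is the final technical hurdle.
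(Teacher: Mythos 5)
A structural point first: this paper does not actually prove Theorem \ref{searchy}. The proof is deferred to the companion paper \cite{algorithmscirc}, and the introduction explains why: the extension of Lackenby's machinery to the circular setting is not routine. The only argument given here is for the special case of Theorem \ref{example} (fibered $K$, handle number one), which proceeds by repeatedly untelescoping, invoking the classification of Heegaard splittings of $F \times I$ \cite{heegstandard} once the thin surface becomes incompressible (hence a fiber), and then re-amalgamating algorithmically. So your proposal can only be measured against the paper's stated reasons for why the general theorem is delicate, and against that special case.

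Measured that way, your plan has two genuine gaps, both at steps you treat as routine. You assert that the circular splitting can be untelescoped ``without changing thin-genus or increasing handle number'' and then recovered because ``amalgamation is determined by the combinatorial data of the compression bodies lying between consecutive thin surfaces.'' The introduction warns that precisely these two statements fail in the circular setting: amalgamation of circular generalized Heegaard splittings is \emph{not} unique (in contrast to Proposition 3.1 of \cite{lacksimple}), and untelescoping an unstabilized circular Heegaard splitting can produce a stabilized generalized splitting, unlike the classical case. Since your enumeration strategy hinges on recovering every circular Heegaard surface of prescribed thin-genus and handle number from finitely many normalized generalized splittings via a controlled amalgamation, these failures strike at the core of the reduction; repairing them is the actual content of \cite{algorithmscirc}, not a technical hurdle one can defer. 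Your alternative route fares no better than you fear: when $K$ is fibered, cutting along the natural thin surface gives $F \times I$, which is nowhere near simple, so Theorem \ref{lackheegaard} cannot be applied to $M_F$; the paper's Section 6 instead substitutes the Scharlemann--Thompson classification of splittings of $F \times I$, which is available only in that special case. Your instincts about where the difficulty lies (carrying bounds around the circle rather than along an interval, and the non-simplicity of the cut-open manifold) are sound, but as written the proposal assumes away exactly the phenomena that make the theorem hard.
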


The input for the algorithm referred to in Theorem \ref{searchy} is a diagram for $K$ and two integers specifying the thin-genus and handle numbers of the desired circular Heegaard surfaces. The output is a finite list of pairs of surfaces $(F_1,S_1),\ldots,(F_k,S_k)$ each of which forms a circular Heegaard surface for $K$. Note however that there is no guarantee that the surfaces on the list are pairwise non-isotopic. 

Given the semantic similarities, one might expect a proof of Theorem \ref{searchy} to be a simple extension of the ideas used to prove Theorem \ref{lackheegaard}. This, however, is not the case, and there are several technical challenges inherent in proving Theorem \ref{searchy}. Many of these were addressed in previous work of the author \cite{bridgealg} while studying bridge surfaces for hyperbolic knots in $S^3$. Moreover, untelescoping and amalgamating circular Heegaard splittings has several significant, and surprising, differences compared to performing these operations on classical Heegaard splittings. For example, amalgamation (a key tool in the proof of Theorem  \ref{lackheegaard}) is unique for classical generalized Heegaard splittings (see Proposition 3.1 of \cite{lacksimple}), whereas it is not for circular generalized Heegaard splittings. Also, if one untelescopes an unstabilized Heegaard splitting, the resulting generalized Heegaard splitting is also unstabilized. The corresponding statement is false for circular Heegaard splittings. These considerations merit  investigation in their own right, and for this reason we prove Theorem \ref{searchy} in a separate paper \cite{algorithmscirc}.

This paper is arranged as follows: In Section \ref{circpre} we introduce circular Heegaard splittings. In Section  \ref{plumb} we define plumbing and review Scharlemann and Thompson's work \cite{sscross} which relates genus reducing crossing changes on knots in $S^3$ to plumbed on Hopf bands on minimal genus Seifert surfaces. In Section \ref{bigsec} we prove Theorem \ref{maintwist} and in Section \ref{secmain} we prove Theorem \ref{maint}, assuming Theorem \ref{searchy}. The final section of this paper examines an illustrative example: the case there $K$ and $K'$ are both fibered, and provide a proof of Theorem  \ref{maint} in this case that is independent of Theorem \ref{searchy}.

I am grateful to Jessica Banks for some helpful comments she provided after reading a  draft of this paper, and to the Australian Research Council for its support through Discovery grant DP110101104.

\section{Circular Heegaard splittings}\label{circpre}

This paper uses ideas from the theory of Heegaard splittings to understand  genus reducing crossing changes of knots. We therefore begin with some definitions from this theory. 

A \emph{compression body} is a connected orientable $3$-manifold $C$ that is either a handlebody or  obtained from $S \times [0,1]$ by attaching $1$-handles to $S \times \{1\}$, where $S$ is a compact, orientable, possibly disconnected surface with no 2-sphere components. The copy of $S \times \{0\}$ in $C$ is called the \emph{negative boundary} and is denoted $\partial_-C$.  The copy of $\partial S \times I$ in $C$ is called the \emph{vertical boundary}. The negative boundary and vertical boundary are defined to be empty when $C$ is a handlebody. The closure of the rest of the boundary is called the \emph{positive boundary} and is denoted $\partial_+C$. Note that this definition is not quite standard; some authors insist that $S$ is a closed surface, and some do not require $C$ to be connected. 

A \emph{Heegaard splitting} is a decomposition of a compact orientable $3$-manifold $M$ along an orientable properly embedded surface $S$, called the \emph{Heegaard surface} for the splitting, into two compression bodies, $C_1$ and $C_2$, so that $\partial_+C_1 \cap \partial_+C_2 = S$. A \emph{circular Heegaard splitting} is a decomposition of a compact orientable $3$-manifold along two disjoint orientable properly embedded  surfaces $S$ and $F$ into two compression bodies, $C_1$ and $C_2$, so that $\partial_-C_1 \cap \partial_-C_2 = F$ and $\partial_+C_1 \cap \partial_+C_2 = S$. The pair of surfaces $(F,S)$ is called the \emph{circular Heegaard surface} for the circular Heegaard splitting.  We allow the case that $C_1$ and $C_2$ are products. A schematic diagram of a circular Heegaard splitting is shown in Figure \ref{CHS}. 

\begin{figure}[h!]  
\centering
 \psfrag{C}[][]{$C_1$}
 \psfrag{D}[][]{$C_2$}
  \psfrag{S}[][]{$S$}
 \psfrag{F}[][]{$F$}
\includegraphics[width=0.2\textwidth]{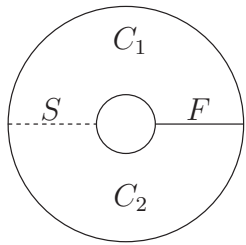}
\caption{A circular Heegaard splitting} \label{CHS}
\end{figure} 

In this paper we will be concerned with circular Heegaard splittings of knot exteriors. To this end let $K$ be an oriented knot in $S^3$ and let $M = \overline{S^3 - \eta(K)}$ be the exterior of $K$. A pair of connected oriented surfaces $(F,S)$ embedded in $S^3$ will be a called a \emph{circular Heegaard surface for $K$} if the following conditions hold: \begin{enumerate}
\item The surfaces $F$ and $S$ are both Seifert surfaces for $K$. That is,  their boundaries both equal $K$ and give the correct orientation. 
\item The surfaces $F$ and $S$ are disjoint in their interiors. 
\item The intersection of $(F,S)$ with $M$ forms a circular Heegaard splitting for $M$. 
\item The surfaces $F$ and $S$ each intersect $\overline{\eta(K)}$ in an annulus that joins $K$ to a longitudinal curve on $\partial(\overline{\eta(K)})$. 
\end{enumerate}

For a circular Heegaard surface $(F,S)$, for either a 3-manifold or a knot, we call $F$ the \emph{thin surface} and we call $S$ the \emph{thick surface}. The number of 1-handles in the compression bodies $C_1$ and $C_2$ is the \emph{handle number} of the splitting. We define the \emph{thick-genus} of the splitting to be the genus of $S$. We define the \emph{thin-genus} of  the splitting to be the multi-set containing the genuses of the components of $F$. We will mainly be interested in the case where $F$ is connected, in which case we abuse notation slightly and say the thin-genus of the splitting is the genus of $F$. When we say that a splitting $(F,S)$ has genus $n$, it is implicit that $F$ is connected. In this case the handle number is simply the difference between the genus of $F$ and the genus of $S$. Note that $S$ is automatically connected. 

In the classical theory, there is a close interplay between Heegaard splittings and Morse theory. The same is true in the circular setting. Consider a circular Heegaard splitting $(F,S)$ for a 3-manifold $M$. Let $S^1$ be identified with the unit circle in the complex plane and define a circular Morse function $f\colon M\rightarrow S^1$ as follows. Put $f^{-1}(1)$ equal to $F$ and $f^{-1}(-1)$ equal to $S$. As $t$ increases from $0$ to $\pi$, we let $f^{-1}(e^{it})$ sweep though parallel surfaces except for exactly one index-$1$ critical point for every  1-handle in $C_1$, and as $t$ increases from $\pi$ to $2\pi$ we let $f^{-1}(e^{it})$ sweep though parallel surfaces except for exactly one index-$2$ critical point for every 1-handle in $C_2$. We call the function $f\colon M\rightarrow S^1$ constructed in this way an \emph{associated circular Morse function} for the circular Heegaard splitting. In the case that $(F,S)$ is a circular Heegaard splitting for a knot in $S^3$, we extend $f$ to the complement of $K$ by radial extension inside $\overline{\eta(K)}$. For more on circular Heegaard splittings for knots and their associated circular Morse functions see \cite{cvtp}.

\section{Plumbing and genus reducing crossing changes}\label{plumb}

The fact, due to Scharlemann and Thompson \cite{sscross}, that a genus reducing crossing change on a knot in $S^3$ can be realized as the untwisting of a Hopf band on a minimal genus Seifert surface is crucial for this paper. We give precise definitions for this statement now. 

Suppose that $S_1$ and $S_2$ are compact orientable surfaces embedded in 3-balls $B_1$ and $B_2$. Suppose that the
intersection of each $S_i$ with $\partial B_i$ is a square $I \times I$ such that
$(I \times I) \cap \partial S_1 = I \times \partial I$ and $(I \times I) \cap
\partial S_2 = \partial I \times I$. Then the surface $S$ in $S^3$ obtained by  \emph{plumbing} $S_1$ and $S_2$
is constructed by gluing the boundaries of $B_1$ and $B_2$ so that
the two copies of $I \times I$ are identified in a way that preserves their product
structures. See Figure \ref{Plumb}.

\begin{figure}[h!]  
 \psfrag{A}[][]{$B_1$}
  \psfrag{B}[][]{$S_1$}
   \psfrag{C}[][]{$B_2$}
    \psfrag{D}[][]{$S_2$}
     \psfrag{S}[][]{$S$}
\centering
\includegraphics[width=0.4\textwidth]{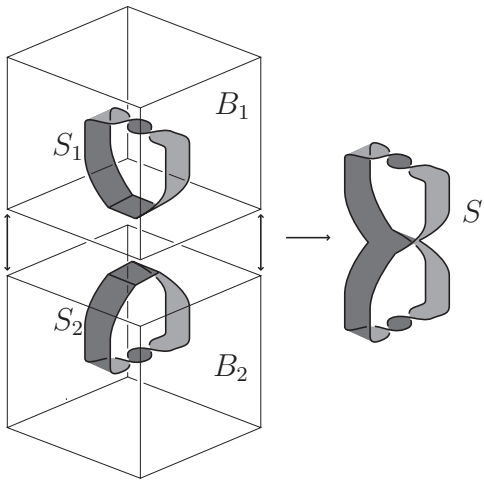}
\caption{The plumbing of two surfaces} \label{Plumb}
\end{figure}

 Suppose that $S_1$ is a Hopf band, which is an unknotted annulus embedded in $B_1$ with a full twist. The \emph{associated crossing disk} $D$ is a disk embedded in the interior of $B_1$ which intersects $S_1$ in a single essential arc in the interior of $D$. The boundary of this disk is the \emph{associated crossing circle}. See Figure \ref{ass}. We refer to preforming $\pm1$ Dehn surgery on the crossing circle so as to remove the full twist of $S_1$ as \emph{untwisting} $S_1$. If we expand $B_1$ a little in all directions, we obtain a 3-ball that intersects $S$ in two arcs properly embedded in $S$. We call this the \emph{encapsulating $3$-ball} of $S_1$. 

\begin{figure}[h!]  
 \psfrag{F}[Bc][Bl]{$\textrm{circle}$}
  \psfrag{A}[Bc][Bl]{$\textrm{crossing}$}
  \psfrag{B}[][]{$\textrm{Hopf band}$}
   \psfrag{C}[][]{$\textrm{crossing disk}$}
    \psfrag{D}[][]{$B_1$}
     \psfrag{E}[][]{$S_1$}

\centering
\includegraphics[width=0.47\textwidth]{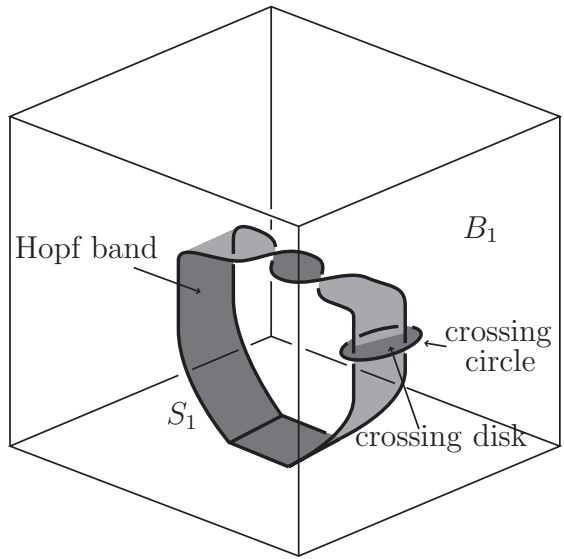}
\caption{A Hopf band} \label{ass}
\end{figure} 

The associated crossing circle for a plumbed on Hopf band is an example of a crossing circle. A \emph{crossing circle} for a knot $K$ in $S^3$ is the boundary of an embedded disk in $S^3$ that interests $K$ in two points of opposite sign. Further, a crossing circle is decorated with a number $x \in \{+1,-1\}$. A \emph{crossing change along $C$} is achieved by performing Dehn surgery along $C$ with slope $x$.

The following result is due to Scharlemann and Thompson and relies on deep results in sutured manifold theory. See Proposition 3.1 of \cite{sscross}.

\begin{theorem} \label{stplumb} Let $C$ be a crossing circle for a non-trivial knot $K$ such that
performing a crossing change along $C$ reduces the genus of $K$. Then $K$ has a minimal genus
Seifert surface which is obtained by plumbing surfaces $S_1$ and $S_2$,
where $S_1$ is a Hopf band. Moreover, there is an ambient isotopy, keeping $K$ fixed
throughout, that takes $C$ to the associated crossing circle for $S_1$, and performing the crossing change along $C$ untwists $S_1$.
\end{theorem}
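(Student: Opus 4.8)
The plan is to work in the exterior of the two-component link $L = K \cup C$ rather than in the exterior of $K$ alone, and to detect the genus drop using Gabai's sutured manifold theory. Write $X = \overline{S^3 - \eta(L)}$, so that $\partial X$ consists of the tori $\partial\eta(K)$ and $\partial\eta(C)$. Since $D$ is a disk, Dehn filling the $C$-cusp along the slope $\lambda_C$ bounding $D$ recovers the exterior of $K$, while filling along $\lambda_C \pm \mu_C$ recovers the exterior of $K'$; thus the crossing change is encoded by two Dehn fillings of the $C$-cusp whose slopes are at distance one. The crossing disk meets $X$ in a properly embedded thrice-punctured sphere (pair of pants) $P$, with one boundary curve on $\partial\eta(C)$ and two meridians of $K$ on $\partial\eta(K)$; this is the frame in which the crossing change lives.

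First I would normalize a minimal genus Seifert surface against the crossing disk. Starting from any minimal genus Seifert surface $R$ for $K$, I would make $R$ transverse to $D$ and remove the closed curves of $R \cap D$ by the usual innermost-disk argument: an innermost circle on $D$ bounds a subdisk giving either a compression or an isotopy of the incompressible, norm-minimizing surface $R$, and neither can lower the genus, so all such circles can be removed. This leaves $R \cap D$ equal to a single arc $\alpha$; if $\alpha$ were inessential in $R$ one could isotope $C$ off $R$ and conclude that the crossing change cannot reduce the genus, contrary to hypothesis, so $\alpha$ is essential. Equivalently, $R$ may be isotoped off the $C$-cusp and regarded as a norm-minimizing surface for the relevant class in $H_2(X, \partial\eta(K))$ meeting $P$ in the single arc $\alpha$.

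The heart of the argument is the sutured manifold step, together with the local extraction of the band. Decomposing the $\lambda_C$-filling of $X$ along $R$ produces a taut sutured manifold, tautness being exactly Gabai's theorem that a minimal genus Seifert surface is Thurston-norm minimizing and hence sits atop a taut sutured hierarchy. The hypothesis that the $\lambda_C \pm \mu_C$-filling strictly lowers the genus says that the image of $R$ after untwisting the band carried by $\alpha$ is a \emph{compressible} Seifert surface $\hat R'$ for $K'$ of the same abstract genus $g(K)$ (the untwisting is a re-embedding, not an ambient isotopy, so the genus of the abstract surface is unchanged while its position in $S^3$ changes). Invoking Gabai's result that a norm-minimizing surface remains norm-minimizing under all Dehn fillings of the cusp except at the boundary slope of the surface, the strict drop forces the degenerate, localized case: a compressing disk for $\hat R'$ is created precisely by the untwisting and may be taken to meet the twisted band once. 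Pulling this disk back through the crossing change, a regular neighborhood in $R$ of $\alpha$ together with the band following $C$ is an unknotted annulus carrying a single full twist, that is, a Hopf band $S_1$ whose core is isotopic to $C$. The encapsulating $3$-ball is then a neighborhood of $S_1 = \eta(\alpha \cup \mathrm{band})$, its boundary meets $R$ in the plumbing square, and $S_2 = \overline{R - S_1}$ is the complementary surface, so that $R = S_1 * S_2$ is the desired minimal genus plumbing. By construction $C$ is the associated crossing circle of $S_1$, and the isotopy carrying $C$ to it is obtained by sliding $\partial D = C$ to the boundary of the neighborhood of $\alpha$ inside the encapsulating ball, keeping $K$ fixed; performing the $\pm 1$ surgery along $C$ removes the full twist of $S_1$ and so is exactly untwisting $S_1$.

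The step I expect to be the main obstacle is the sutured-manifold-theoretic one: proving that a \emph{strict} genus drop under the distance-one filling must be realized by a compressing disk supported on a single full twist of a single band, rather than by some global rearrangement of $R$. This is precisely where the depth of Gabai's machinery enters --- the norm-detection results for sutured manifold decompositions and their behaviour under Dehn filling --- and where the sign and slope bookkeeping (the two fillings being at distance one on the $C$-cusp) must be handled with care. Everything before and after this step is normalization of intersections and local surface topology.
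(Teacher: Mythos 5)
You should know at the outset that the paper does not prove Theorem \ref{stplumb} at all: it is quoted from Scharlemann and Thompson, with a pointer to Proposition 3.1 of \cite{sscross}, precisely because its proof requires deep sutured manifold machinery. So your sketch has to be measured against their argument. Your opening moves are the standard ones and do appear, in some form, in their work: passing to the exterior $X$ of $K \cup C$, viewing the crossing change as two Dehn fillings of the $C$-cusp at distance one, and normalizing a minimal genus Seifert surface so that it meets the crossing disk $D$ in a single essential arc. Two slips here are worth flagging. First, the filling of the $C$-cusp that recovers the exterior of $K$ is along the meridian $\mu_C$, not along the longitude $\lambda_C$ bounded by $D$; filling along $\lambda_C$ is $0$-surgery on an unknot and produces a summand of $S^2 \times S^1$. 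The crossing change is the filling along $\mu_C \pm \lambda_C$, and the distance-one statement survives with the roles of $\mu_C$ and $\lambda_C$ exchanged. Second, the normalization needs more care than ``the usual innermost-disk argument'': circles of $R \cap D$ can encircle one or both points of $K \cap D$, and there may be arcs of intersection ending on $C$ itself, which is why the theorem is phrased as an isotopy moving $C$ (keeping $K$ fixed) rather than an isotopy of the surface.

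The genuine gap is the step you yourself call ``the main obstacle,'' and it is not an obstacle inside the proof --- it \emph{is} the theorem. From the hypothesis that the $\mu_C \pm \lambda_C$ filling strictly lowers the genus, you assert that ``a compressing disk for $\hat R'$ is created precisely by the untwisting and may be taken to meet the twisted band once,'' and you read the Hopf band off from that disk. No argument is offered, and the tool you invoke cannot supply one. Gabai's theorem says that a surface disjoint from the $C$-cusp which is taut in $X$ remains taut in all but at most one filling; it constrains \emph{which slope} can be degenerate (and indeed identifies the crossing-change slope as the exceptional one), but it is a statement about norms of homology classes and gives no geometric control on how $R$ sits relative to $C$: it does not produce a compressing disk meeting the band once, nor the unknottedness and framing conditions needed for a neighborhood of $\alpha$ plus a band to form an honest Hopf band meeting the rest of $R$ in a plumbing square inside a $3$-ball. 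Note also that your strategy, if it worked, would prove that \emph{every} minimal genus Seifert surface, once normalized against $D$, is such a plumbing with $C$ as associated crossing circle; the theorem is only existential, and this stronger universal claim is not what Scharlemann and Thompson establish. Their proof runs in the opposite direction: using Scharlemann's generalized (e.g.\ $\beta$-) Thurston norms and taut sutured manifold decompositions adapted to the crossing circle --- in effect comparing taut surfaces for $K$, for $K'$, and for the oriented smoothing of $K$ at the crossing --- they \emph{construct} a taut Seifert surface for which attaching the full-twisted band at the crossing site is a taut operation; the plumbing structure is built into the construction rather than extracted afterwards from an arbitrary minimal genus surface. Until you can either reproduce that construction or give an independent proof of your localization claim, what you have is a correct frame with its central step missing.
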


Theorem \ref{stplumb} is illustrated in Figure \ref{dehopf}. It says that for any genus reducing crossing change on a knot $K$, there exists a minimal genus Seifert surface for $K$ that may be taken inside some 3-ball to look exactly like the left hand surface, $F$, in Figure \ref{dehopf}, and so that performing the crossing change untwists the Hopf band yielding something looking exactly like the right hand surface $F'$. 

\section{Surgery on circular Heegaard splittings}\label{bigsec}

The key construction in this paper relates to how a genus reducing crossing change on a knot with a circular Heegaard splitting gives rise to a new circular Heegaard splitting for the knot after the crossing change. We first illustrate our methods for the case when the initial knot is fibered. 

\begin{theorem}\label{fiberswitch}
Let $K \subseteq S^3$ be a fibered knot with fiber $S$. Suppose that $S$ is the plumbing of two surfaces $X_1$ and $X_2$, where $X_1$ is a Hopf band. Let $K'$ be the knot obtained by performing a crossing change along the crossing circle associated to $X_1$. Then $K'$ admits a handle number one circular Heegaard surface $(F',S')$ with the property that $S'$ is ambient isotopic in $S^3$ to the surface obtained from $S$ by untwisting $X_1$.
\end{theorem}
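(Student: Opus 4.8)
The plan is to obtain the circular Heegaard splitting of $M' = \overline{S^3 - \eta(K')}$ by surgering the fibration of $M = \overline{S^3 - \eta(K)}$. Since $K$ is fibered with fiber $S$, the exterior $M$ is the mapping torus $S \times [0,1]/(x,1)\sim(\phi(x),0)$ of the monodromy $\phi$, and the fibration map is itself an associated circular Morse function for a handle-number-zero circular Heegaard splitting: one takes $f^{-1}(1) = S$, there are no critical points, and both compression bodies are products. I would set $n = g(S)$. By definition, untwisting the Hopf band $X_1$ is $\pm 1$ Dehn surgery on the associated crossing circle $C = \partial D$, and performing this surgery on $M$ produces $M'$, carrying the fiber $S$ to the untwisted surface. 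Because the crossing disk $D$ meets $S$ only in the essential arc $\alpha$, the first step is to isotope $C$ into a product collar $S \times (0,\delta)$ of the fiber, lying in a single level $f^{-1}(e^{i\epsilon})$; then the surgery is supported in a solid-torus neighborhood $N(C)$ sitting over a short arc of $S^1$, and $f$ is unchanged on the rest of $M$.

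The heart of the argument is a framing comparison. The curve $C$ acquires two natural framings: the framing induced by the fiber surface, and the $0$-framing of $C$ as an unknot in $S^3$ that the crossing change modifies by $\pm 1$. Surgering $C$ with the surface framing would merely alter the monodromy by a Dehn twist and leave $M'$ fibered with the same fiber $S$; this cannot be what untwisting does, since the untwisted surface is compressible and hence not a fiber. I would verify that the full twist of the Hopf band is exactly the discrepancy, so that the crossing-change framing differs from the surface framing by precisely one. Removing $N(C)$ and regluing the surgery solid torus with this framing, I extend $f$ across the new solid torus to a circular Morse function $f' \colon M' \to S^1$; the unit framing discrepancy forces the simplest such extension to have exactly one index-$1$ and one index-$2$ critical point. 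This pair of critical points is the single $1$-handle in each of the two compression bodies, so the associated circular Heegaard splitting has handle number one.

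It remains to name the two surfaces. I would arrange the surgery so that the level set of $f'$ lying just beyond the reglued region, on the side away from the two critical points, is exactly the surface obtained from $S$ by untwisting $X_1$; this is the genus-$n$ thick surface $S'$, and it is ambient isotopic in $S^3$ to the untwisted surface by construction. The complementary level, squeezed between the two critical points, is the thin surface $F'$, obtained from $S'$ by the single compression corresponding to the index-$2$ critical point; since untwisting a plumbed Hopf band yields a surface compressing to genus one less, $F'$ has genus $n-1$, confirming once more that the handle number is $g(S') - g(F') = 1$. Finally I would check that $(F',S')$ genuinely satisfies the definition of a circular Heegaard surface for $K'$: that $F'$ is connected with $\partial F' = \partial S' = K'$ (using that $C$ is null-homologous in $M$, so the Seifert longitude is preserved by the surgery), that $F'$ and $S'$ are disjoint in their interiors, and that after radial extension into $\overline{\eta(K')}$ conditions (1)--(4) hold.

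The step I expect to be the main obstacle is the framing computation together with the count of critical points: one must show that the crossing-change surgery raises the handle number by exactly one, i.e. that the reglued solid torus contributes precisely one index-$1$ and one index-$2$ critical point and no more, and that the resulting pair does not cancel (which would return a fibration). Keeping the sign of the surgery consistent with untwisting, rather than with adding a twist, is the delicate bookkeeping here. I would control all of this by working inside the encapsulating $3$-ball of $X_1$, where the crossing circle $C$, the arc $\alpha$, the product collar of $S$, and the two framings are simultaneously visible, and comparing the surface framing with the crossing-change framing directly in this local model.
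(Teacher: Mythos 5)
Your strategy is genuinely different from the paper's, but it has two serious gaps, either of which is fatal as written. The first is the opening step: you assert that the crossing circle $C$ can be isotoped to lie in a single fiber $f^{-1}(e^{i\epsilon})$. Isotoping $C$ into a collar $S\times(0,\delta)$ is easy, since the complement of a fiber is a product, but placing it in a single \emph{level} is a much stronger claim, and you give no argument for it. The geometry in fact points the other way: since the crossing disk meets $S$ in an essential arc $\alpha$ with endpoints on $K$, the circle $C$ is isotopic (in the complement of $S\cup K$) to the boundary of a small neighborhood of $\alpha$ in the crossing disk, a curve that runs along the $+$ side of $S$ parallel to $\alpha$, passes around $K$, runs back along the $-$ side, and passes around $K$ again. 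It is dual to an \emph{arc} of the fiber, not to a closed curve on it, and this alternating behavior is precisely why the paper's proof works inside the encapsulating ball with the boundary-compression disks $D_1$ and $D_2$ (and why the proof of Theorem \ref{generalswap} needs the clean alternating product disk). Without the leveling step, your surgery is not supported over a short arc of $S^1$ and the extension problem you pose never gets set up.

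The second gap is that the framing facts you rely on are incorrect, in a way that would contradict the theorem rather than prove it. For a curve $c$ lying in a fiber, the surgeries that reglue a solid torus fibered by annuli, and hence merely compose the monodromy with a Dehn twist, are those at distance one from the surface framing (slope equal to surface framing $\pm 1$), not surgery at the surface framing itself; surgery exactly at the surface framing instead makes $c$ bound a disk in the surgered manifold and compresses the fiber. Consequently your two claims --- that surgery at the surface framing twists the monodromy, and that the crossing-change slope differs from the surface framing by precisely one --- combine, once the first is corrected, to say that the crossing-change surgery extends the fibration with \emph{no} critical points at all: $K'$ would be fibered with fiber of genus $g(S)$ and the splitting would have handle number zero, not one. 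That conclusion is impossible here, because untwisting a plumbed-on Hopf band yields a visibly compressible Seifert surface, so $g(K')\le g(S)-1$. In other words, a unit framing discrepancy gives the opposite of what you assert, and no actual count of critical points, nor any verification that the two resulting pieces are compression bodies with the untwisted surface as a thick level, is carried out. The paper avoids all of this by never moving $C$ and never computing framings: it replaces $K$ by $K'$ only inside the encapsulating ball $B$ and redefines the circular Morse function by hand there, keeping it equal to $f$ outside $B$, trading the boundary compression at time $t_1$ for a band (one index-$1$ point) and the band at time $t_2$ for a boundary compression along $D_2$ (one index-$2$ point); that explicit local exchange is exactly where the handle-number-one splitting comes from.
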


\begin{proof}
Let $f\colon S^3-K \rightarrow S^1$ be the fibration of the knot complement with $f^{-1}(1) = S$. (Throughout we regard $S^1$ as the  unit circle in the complex plane.) Let $B$ be the encapsulating $3$-ball for $X_1$. We seek to understand the restriction of $f$ to $B$.

\begin{figure}[h!]  
 \psfrag{A}[cc][Bl]{$D_1$}
 \psfrag{B}[cc][Bl]{$B$}
 \psfrag{F}[cc][Bl]{$S=S_0$}
 \psfrag{K}[cc][Bl]{$K$}
\centering
\includegraphics[width=0.4\textwidth]{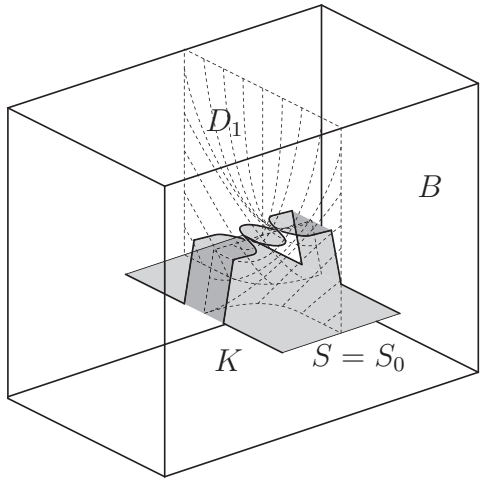}
\caption{The boundary compression disk $D_1$} \label{push}
\end{figure} 

Let $S_t$ be the surface $\overline{f^{-1}(e^{it})}$ for $t\in[0,2\pi)$, so that $S=S_0$. Consider $S \cap B$ as an embedded, but not properly embedded, surface in $B$ and observe that $S\cap B$ admits two boundary compression disks, $D_1$ and $D_2$, in $B$, described as follows. The first, $D_1$, is illustrated in Figure \ref{push}. It has boundary that consists of two arcs both joining points in distinct components of  the interior of $S \cap \partial B$. One arc lies on $\partial B$, missing $S$ in its interior, running over the top of $B$ in Figure \ref{push}. The other arc lies on the interior of $S$, running once around the Hopf band $X_1$. See Figure \ref{push}. The disk $D_1$ is disjoint from $K$, and disjoint from $S$ in its interior. Note how the presence of a full twist in the Hopf band allows for the construction of $D_1$. 

\begin{figure}[h!]  
 \psfrag{A}[cc][Bl]{$D_1$}
 \psfrag{B}[cc][Bl]{$B$}
 \psfrag{F}[cc][Bl]{$S_0$}
 \psfrag{K}[cc][Bl]{$K$}
 \psfrag{S}[cc][Bl]{$D_2$}
\centering
\includegraphics[width=0.4\textwidth]{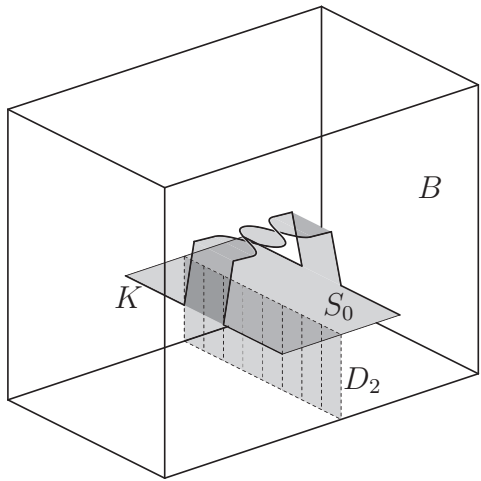}
\caption{The boundary compression disk $D_2$} \label{push2}
\end{figure} 

The other disk, $D_2$, is easier to describe. It also has boundary consisting of two arcs, one running along $\partial B$ and the other running along $S$, as before. However now the disk emanates from the other side of $S$, and the  arc on $S$ does not run around $X_1$. The disk $D_2$ is illustrated in Figure \ref{push2}.

Note that if we push $S$ across $D_1$, we only isotope $S=S_0$ rel $K$ in $S^3$. Therefore, after an isotopy of $f$,  we may take the surface obtained by pushing $S_0$ across $D_1$, and then pushing the whole surface a little more to make it disjoint from $S_0$, to be $S_{\frac{\pi}{2}}$. Similarly we may push $S_0$ in the other direction rel $K$ across $D_2$, and then a little more, and take the resulting surface to be $S_{\frac{3\pi}{2}}$.

The surfaces $S_{\frac{\pi}{2}} \cap B$ and $S_{\frac{3\pi}{2}} \cap B$ are both isotopic rel $K$ in $B$ to the surface $S_{\pi}$ shown in Figure \ref{push3}.

\begin{figure}[h!]  
 \psfrag{A}[cc][Bl]{$D_1$}
 \psfrag{B}[cc][Bl]{$B$}
 \psfrag{F}[cc][Bl]{$S_\pi$}
 \psfrag{K}[cc][Bl]{$K$}
 \psfrag{S}[cc][Bl]{$D_2$}
  \psfrag{P}[cc][Bl]{$\alpha$}
\centering
\includegraphics[width=0.4\textwidth]{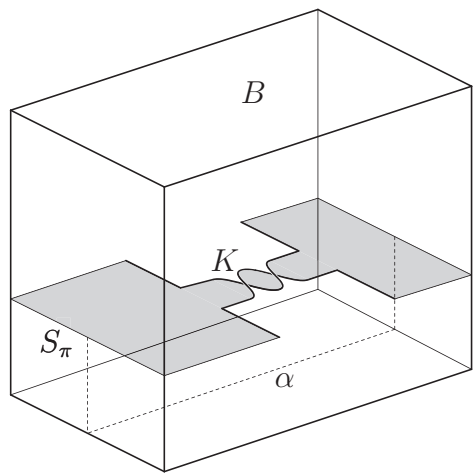}
\caption{The surface $S_\pi$} \label{push3}
\end{figure}

To recap, we may arrange that $f$ interacts with $B$ as follows. The fiber surface 
$S_t$ starts as $S_0$ and, as $t$ increases from $0$, $S_t$ begins by moving in an upward direction. As $t$ increases past some time $t_1$ a little less than $t=\frac{\pi}{2}$ the two arcs of $S_t \cap \D B$ pinch together and form arcs joining the points of $K \cap \D B$ differently. This corresponds to moving a saddle of $S_t$ through $\D B$, although in $S^3$ the surfaces $S_t$ are only sweeping out via an isotopy rel $K$. As $t$ increases to $\pi$, the two `flaps' on either side of $S_t \cap B$ sweep downwards, looking like the surface in Figure \ref{push3} at time $t=\pi$. 

As $t$ increases past $\pi$, the two `flaps' on either side of the surface shown in Figure \ref{push3} continue to move downwards until some time $t_2$ just after $t=\frac{3\pi}{2}$ when the two flaps pinch together. At this time, inside $B$ the surface $S_t$ is changed by the addition of a band that runs along the arc $\alpha$, shown in Figure \ref{push3}. We have now arrived back at a surface that is isotopic rel $K$ in $B$ to $S_0$, and the sweepout continues through parallel surfaces until we arrive back at the surface $S_0$ we started with. 

There are two times $t$ when $S_t \cap B$ changes by something other than an isotopy rel $K$ in $B$. At the first time, $t_1$, a little before  time $\frac{\pi}{2}$, we see $S_t \cap B$ changing by a boundary compression along $D_1$. At the same time, we see $S_t \cap \overline{(S^3 - B)}$ changing by the addition of a band. At time $t_2$, a little after time $\frac{3\pi}{2}$, the situation is reversed: a band is added inside $B$, but on the outside of $B$ the surface $S_t \cap \overline{(S^3 - B)}$ changes by a boundary compression. 

Our strategy to prove Theorem \ref{fiberswitch} is to change $f$ by an explicit replacement inside $B$. To this end, first replace $K$ with the knot $K'$ which agrees with $K$ outside $B$, and inside $B$ consists of two arcs joining the four points of $K \cap \partial B$ as shown in Figure \ref{aftpush1}. This corresponds to performing a crossing change to $K$ along the crossing circle associated to $X_1$, as stipulated in the statement of Theorem \ref{fiberswitch}. Define $f'\colon S^3 - K' \rightarrow S^1$ by setting $f'(x)=f(x)$ for $x\in\overline{(S^3-B)}-K$. It remains to specify $f'(x)$ for $x$ in the interior of $B$. This amounts to specifying, for each $t\in[0,2\pi)$, the intersection of  $\overline{f'^{-1}(e^{it})}$ with $B$. We denote the surface $\overline{f'^{-1}(e^{it})}$ by $S_t'$.

Start by letting $S_0'$ intersect $B$ in the fashion illustrated in Figure \ref{aftpush1}. Note that $S_0' \cap \D B = S_0 \cap \D B$. Also note that $K' = \D S_0'$.

\begin{figure}[h!]  
 \psfrag{A}[cc][Bl]{$\gamma$}
 \psfrag{B}[cc][Bl]{$B$}
 \psfrag{F}[cc][Bl]{$S_\pi$}
 \psfrag{K}[cc][Bl]{$K'$}
 \psfrag{S}[cc][Bl]{$S_0'$}
  \psfrag{P}[cc][Bl]{$\alpha$}
\centering
\includegraphics[width=0.4\textwidth]{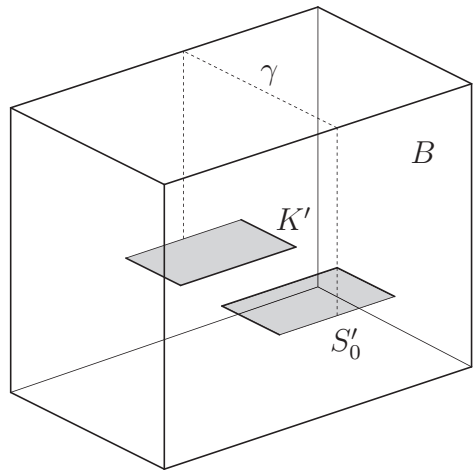}
\caption{The surface $S_0'$} \label{aftpush1}
\end{figure} 

As $t$ increases from $0$, set $S_t'$ to be the family of surfaces that sweep upwards rel $K'$ maintaining $S_t' \cap \D B = S_t \cap \D B$. As $t$ approaches $t_1$, the two arcs of $S_t' \cap \D B$ pinch together in a similar fashion to the two arcs of $S_t \cap \D B$ at this time. However there is now no boundary compression disk to push across. Instead we let $S_t' \cap B$ change by the addition of a band along the arc $\gamma$, shown in  Figure \ref{aftpush1}.

The resulting surface $S_t'$ is isotopic rel $K'$ in $B$ to the surface $S_\pi'$ illustrated in Figure \ref{aftpush2}.

\begin{figure}[h!]  
 \psfrag{A}[cc][Bl]{$\gamma$}
 \psfrag{B}[cc][Bl]{$B$}
 \psfrag{F}[cc][Bl]{$S_\pi$'}
 \psfrag{K}[cc][Bl]{$K'$}
 \psfrag{S}[cc][Bl]{$S_\pi'$}
  \psfrag{P}[cc][Bl]{$\alpha$}
\centering
\includegraphics[width=0.4\textwidth]{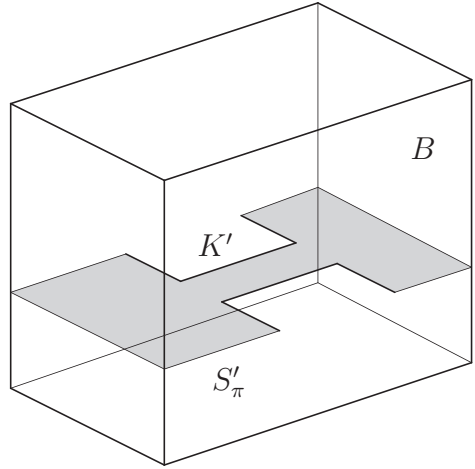}
\caption{The surface $S_\pi'$} \label{aftpush2}
\end{figure} 

As $t$ increases past $t_1$, the two `flaps' of $S_t'$ on the left and right of Figure \ref{aftpush2} sweep downwards, looking like the surface in Figure \ref{aftpush2} at time $t=\pi$. Eventually, as $t$ approaches time $t_2$ the two flaps pinch together once more, but this time we do not add a band to $S_\pi'$. Instead, we facilitate the pinching together of the two flaps of $S_t'$ by boundary compressing along  the disk $D_2$, shown in Figure \ref{d2}.

\begin{figure}[h!]  
 \psfrag{A}[cc][Bl]{$\gamma$}
 \psfrag{B}[cc][Bl]{$B$}
 \psfrag{F}[cc][Bl]{$S_\pi$'}
 \psfrag{K}[cc][Bl]{$K'$}
 \psfrag{S}[cc][Bl]{$D_2$}
  \psfrag{P}[cc][Bl]{$\alpha$}
\centering
\includegraphics[width=0.4\textwidth]{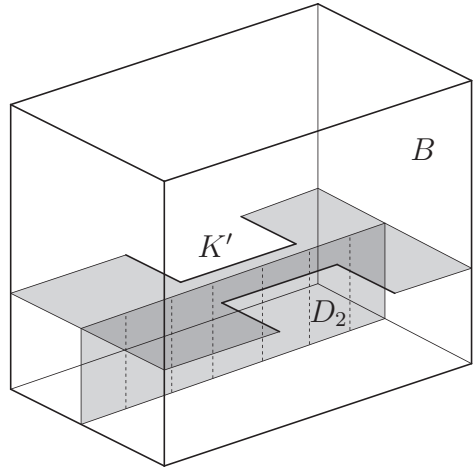}
\caption{The boundary compression disk $D_2$} \label{d2}
\end{figure} 

After boundary compressing along $D_2$ at time $t_2$, the surface $S_t' \cap B$ is now isotopic rel $K'$ in $B$ to the initial surface $S_0'$. We complete the sweepout by allowing $S'_t$ to sweep back up through parallel surfaces to $S_0'$ where it started. 

It is worth emphasizing that we have made sure  $S_t' \cap \D B = S_t \cap \D B$ throughout, even at times $t_1$ and $t_2$. Thus we have constructed a family of surfaces $S_t'$ which specify $f'$. Next we consider the global appearance of the surfaces $S_t'$. To begin with, $S_t'$ starts as $S_0'$ at time $t=0$ and as $t$ increases from $0$ it sweeps upwards rel $K'$ through $S^3$. Then at time $t_1$ a band is added both on the inside of $B$ and the outside of $B$. The overall effect of this is that as $t$ increases past $t_1$, the surfaces $S_t'$ change via the addition of a small tube that runs along the arc $\gamma$  shown in Figure \ref{aftpush1}. Another way of saying this is that $f'$ has an index-1 critical point when $t=t_1$. 

The surfaces $S_t'$ continue to sweep out through $S^3$ via an isotopy rel $K'$ until time $t=t_2$. At this time the surface $S_t' \cap B$ changes via a boundary compression in $B$ along $D_2$, shown in Figure \ref{d2}, and the surface $S_t' \cap \overline{(S^3 - B)}$ also changes via a boundary compression in $\overline{S^3 - B}$. The overall effect of these two boundary compressions is that as $t$ increases from just below $t_2$ to just above $t_2$, the surface $S_t'$ changes by a compression. In other words, $f'$ has an index-2 critical point at time $t=t_2$. 

Put  $F' = S_0'$ and $S' = S'_\pi$. By the comments in the previous two paragraphs, $(F',S')$ forms a handle number one circular Heegaard splitting for $K'$. Also, $S'$ is isotopic rel $K'$ to the surface $F'$ with a tube added along the arc $\beta$, shown in Figure \ref{beta}. This in turn is ambient isotopic to the surface obtained from $S$ by untwisting $X_1$.\begin{figure}[h!]  
 \psfrag{A}[cc][Bl]{$\beta$}
 \psfrag{B}[cc][Bl]{$B$}
 \psfrag{F}[cc][Bl]{$S_\pi$}
 \psfrag{K}[cc][Bl]{$K'$}
 \psfrag{S}[cc][Bl]{$S_0'$}
  \psfrag{P}[cc][Bl]{$\alpha$}
\centering
\includegraphics[width=0.4\textwidth]{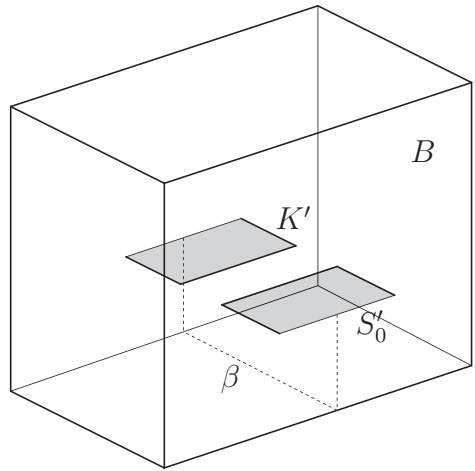}
\caption{The arc $\beta$} \label{beta}
\end{figure}
 \end{proof}

A schematic illustrating the construction in the proof of Theorem \ref{fiberswitch} is shown in Figure \ref{schematicA}.

\begin{figure}[h!]  
 \psfrag{A}[cc][Bl]{$\beta$}
 \psfrag{B}[cc][Bl]{$B$}
 \psfrag{F}[Bc][Bl]{$S=S_0$}
 \psfrag{K}[cc][Bl]{$K$}
 \psfrag{H}[Bc][Bl]{$F'=S_0'$}
  \psfrag{J}[cc][Bl]{$K'$}
    \psfrag{W}[cc][Bl]{$S_\pi$}
 \psfrag{S}[Bc][Bl]{$S'=S_\pi'$}
  \psfrag{V}[Bl][Bl]{$S'_{t_1+\varepsilon}$}
  \psfrag{Q}[Bl][Bl]{$S'_{t_2-\varepsilon}$}
  \psfrag{P}[cc][Bl]{$\alpha$}
\centering
\includegraphics[width=\textwidth]{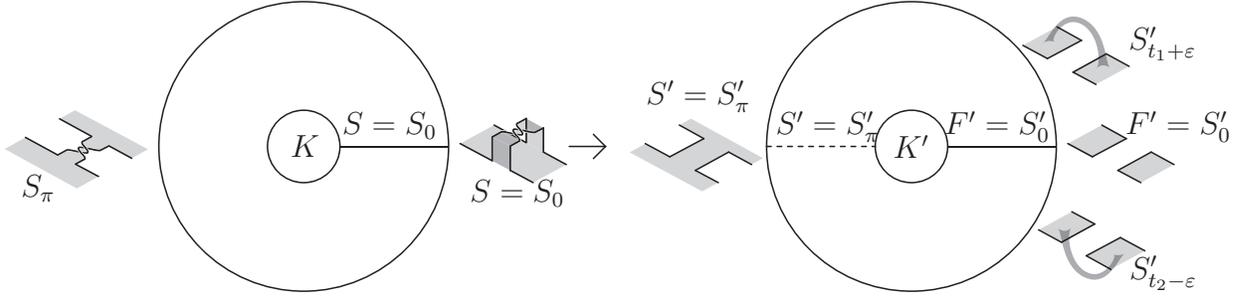}
\caption{Schematic showing the surfaces in Theorem \ref{fiberswitch}} \label{schematicA}
\end{figure} 

We may generalize Theorem \ref{fiberswitch} to the case where $K$ is any knot in $S^3$ with the following theorem, which immediately implies Theorem \ref{maintwist}. 

\begin{theorem}\label{generalswap}
Let $K \subseteq S^3$ be an oriented knot. Suppose that $F$ and $S$ are disjoint Seifert surfaces for $K$ forming a circular Heegaard splitting $(F,S)$ for $K$. Suppose that $F$ is the plumbing of two surfaces $X_1$ and $X_2$, where $X_1$ is a Hopf band. 

Let $F_{\textrm{untwist}}$ be the surface obtained from $F$ by untwisting $X_1$, and let $K' = \partial F_{\textrm{untwist}}$. Then ${K'}$ admits a circular Heegaard splitting $(F',S')$ with the following properties:\begin{enumerate}
\item The surfaces $S$ and $S'$ are related by a  twist, up to ambient isotopy of $S^3$. 
\item The surface $F'$ may be obtained from $F_{\textrm{untwist}}$ by performing a single compression, up to ambient isotopy of $S^3$. 
\item The handle number of $(F',S')$ is one more than that of $(F,S)$. 
\end{enumerate}
\end{theorem}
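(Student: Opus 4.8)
The plan is to run the construction of Theorem~\ref{fiberswitch} locally, inside the encapsulating ball $B$ of the Hopf band $X_1$, after first putting the given circular Heegaard splitting into a standard position relative to $B$. The guiding observation is that the hypothesis that $K$ is fibered entered the proof of Theorem~\ref{fiberswitch} only through the existence of a circular Morse function whose thin level $f^{-1}(1)$ carries the plumbing $X_1$; in the present situation the thin surface $F$ of the splitting $(F,S)$ plays exactly that role, and everything of substance takes place inside $B$.

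First I would fix a circular Morse function $f\colon S^3-K\to S^1$ associated to $(F,S)$, with $f^{-1}(1)=F$ and $f^{-1}(-1)=S$, having $m$ index-$1$ critical points for $t\in(0,\pi)$ and $m$ index-$2$ critical points for $t\in(\pi,2\pi)$. Since $X_1$ is plumbed into $F=f^{-1}(1)$, the ball $B$ meets $F$ in the standard local plumbing picture. The next step is to isotope the splitting rel $K$ into \emph{standard position} relative to $B$: inside $B$ the surfaces $S_t=\overline{f^{-1}(e^{it})}$ should sweep exactly as in Figures~\ref{push}--\ref{push3}, meeting $\partial B$ in a fixed pair of arcs, with the thick surface appearing as $S\cap B=S_\pi\cap B$ of Figure~\ref{push3}, with the only non-isotopy events being a boundary compression along $D_1$ at some $t_1\in(0,\pi)$ and a band addition (dual to the boundary compression along $D_2$) at some $t_2\in(\pi,2\pi)$, and with all $2m$ critical points of $f$ lying outside $B$. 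As in Theorem~\ref{fiberswitch}, each of these two events inside $B$ is compensated by a complementary event in $\overline{S^3-B}$, so that $f$ itself has no critical point inside $B$.

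With $B$ in standard position I would then perform the surgery of Theorem~\ref{fiberswitch} verbatim inside $B$: effect the crossing change supported in $B$ to pass from $K$ to $K'=\partial F_{\textrm{untwist}}$, keep $f'=f$ on $\overline{S^3-B}-K$, and redefine $f'$ inside $B$ so that the event at $t_1$ becomes a band addition along $\gamma$ and the event at $t_2$ becomes a boundary compression along $D_2$. Since nothing outside $B$ has changed, the complementary events there persist, and the two previously cancelling events now reinforce: $f'$ gains exactly one new index-$1$ critical point at $t_1$ and one new index-$2$ critical point at $t_2$, yielding a circular Heegaard splitting $(F',S')$ for $K'$ of handle number $m+1$, which is conclusion~(3). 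Conclusions~(1) and~(2) are then read directly off the local pictures, as in Theorem~\ref{fiberswitch}: outside $B$ one has $S'=S$ and $F'=F=F_{\textrm{untwist}}$, while inside $B$ the thick surface changes by untwisting $X_1$ --- a single twist, giving~(1) --- and the thin surface changes from the untwisted Hopf band to its obvious compression, giving~(2) and dropping the thin-genus by one, as Theorem~\ref{maintwist} requires (untwisting preserves genus and the compression lowers it by one).

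I expect the standard-position step to be the main obstacle. In Theorem~\ref{fiberswitch} the function $f$ had no critical points at all, so the model sweepout inside $B$ was essentially forced; here I must isotope a genuine handle-number-$m$ splitting so that its $2m$ critical points are pushed out of $B$ while the restriction of the sweepout to $B$ is simultaneously normalized to Figures~\ref{push}--\ref{push3}, with $S=f^{-1}(-1)$ actually meeting $B$ as in Figure~\ref{push3}. The leverage is that $X_1$ is a \emph{local} feature of $F=\partial_-C_1=\partial_-C_2$: using the two product collars of $F$ one can control the sweepout through a neighbourhood of $X_1$ and then argue that the compression-body $1$-handles may be isotoped into the complement of $B$. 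Establishing that the encapsulating ball and the handle structure can be made simultaneously disjoint --- so that the local model and the global handle data genuinely decouple --- is the crux.
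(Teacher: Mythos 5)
Your overall strategy coincides with the paper's: isotope the circular Morse function associated to $(F,S)$ so that inside the encapsulating ball $B$ it sweeps exactly as in the fibered case of Theorem \ref{fiberswitch}, with all critical points pushed outside $B$, then perform the local replacement inside $B$ verbatim, so that the two previously cancelling events at $t_1$ and $t_2$ become a genuine index-$1$/index-$2$ pair; conclusions (1)--(3) are then read off as you describe. So the architecture is right, and you have correctly located where the difficulty sits.

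But the step you flag as ``the crux'' --- decoupling the handle structure of the compression bodies from $B$ --- is precisely the mathematical content of this theorem, and your proposal contains no argument for it. The appeal to ``the two product collars of $F$'' does not do the job: the cores of the $1$-handles of $C_1$ and $C_2$ are global objects with endpoints on $F$ which may a priori run through $B$ in an arbitrarily knotted and nested way, and no collar of $F$ controls how they meet $B$; in general an arbitrary choice of handle cores \emph{cannot} simply be slid off a prescribed ball without re-choosing the whole handle structure. The paper fills exactly this gap with Lemma \ref{cglemma}, a generalization of Haken's Lemma whose proof follows the method of Lemma 1.1 of Casson--Gordon \cite{cassgord}. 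One works in the manifold obtained by cutting $\overline{S^3-\eta(K)}$ along $F$, where $S$ becomes a Heegaard surface, and applies the lemma to the clean alternating product disk $D$ associated with $X_1$ (Figure \ref{pushw}), whose boundary runs over both copies of $F$ and the vertical boundary. The lemma produces an isotopy of $S$ after which $S$ meets $D$ in a single arc and, crucially, there exist complete collections of compression disks on \emph{both} sides of $S$, disjoint from $D$, compressing $S$ down to surfaces parallel to the two copies of $F$; the handle cores $\{\alpha_1,\ldots,\alpha_k\}$ and $\{\beta_1,\ldots,\beta_k\}$ are then chosen \emph{dual} to these disk collections, hence disjoint from $D$, and only then can they be pushed out of $B$. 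Without this Haken--Casson--Gordon type argument (or a substitute for it), your proof is incomplete at its only non-routine point.
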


\begin{proof}

Let $B$ be the encapsulating 3-ball for $X_1$, and let $D_1$ and $D_2$ be boundary compression disks identical to the disks $D_1$ and $D_2$ in $B$ in the proof of Theorem \ref{fiberswitch}. Let $f\colon S^3-K\rightarrow S^1$ be a circular Morse function associated with the circular Heegaard splitting $(F,S)$. 

We claim that after an isotopy rel $K$ of $f$ we may take $f$ to restrict to $B$ in exactly the same way as $f$ restricted to $B$ in the proof of Theorem \ref{fiberswitch}. This is achieved as follows. Let the two sides of $F$ be the $+$ side and the $-$ side, with $D_1$ emanating from the $+$ side and $D_2$ emanating from the $-$ side.  The circular Heegaard surface $(F,S)$ decomposes $M = \overline{S^3 - \eta(K)}$ into two compression bodies. We call these $C_1$ and $C_2$, with $D_1$ in $C_1$ and $D_2$ in $C_2$.

Let $\{\alpha_1,\ldots,\alpha_k\}$ and $\{\beta_1,\ldots,\beta_k\}$ be a choice of cores for the 1-handles in $C_1$ and $C_2$ respectively. More precisely, $\{\alpha_1,\ldots,\alpha_k\}$ and $\{\beta_1,\ldots,\beta_k\}$ are disjoint collections of embedded arcs satisfying the following properties:\begin{enumerate}
\item The arcs $\{\alpha_1,\ldots,\alpha_k\}$ (resp. $\{\beta_1,\ldots,\beta_k\}$) have endpoints on $F$, they emanate from the $+$ (resp. $-$) side, and are disjoint from $F$ in their interiors.
\item The surface obtained by adding tubes to $\overline{F - \eta(K)}$ along all the arcs $\{\alpha_1,\ldots,\alpha_k\}$ (resp. $\{\beta_1,\ldots,\beta_k\}$) cobounds a product region with $\overline{S - \eta(K)}$.
\end{enumerate}

To isotope $f$ rel $K$ so that the restriction of $f$ to $B$ is as in the proof of Theorem \ref{fiberswitch} it will be enough to choose the arcs $\{\alpha_1,\ldots,\alpha_k\}$ and $\{\beta_1,\ldots,\beta_k\}$ to be disjoint from $B$. For this we  use the following straightforward generalization of Haken's Lemma.

\begin{lemma}\label{cglemma}
Let $S$ be a Heegaard surface for an irreducible 3-manifold $M$, decomposing $M$ into compression bodies $C_1$ and $C_2$. Let $D$ be a properly embedded disk in $M$ whose boundary consists of a properly embedded arc on $\D_-C_1$, a properly embedded arc on $\D_-C_2$ and two vertical arcs on the union of the vertical boundaries of $C_1$ and $C_2$. Then $S$ is ambient isotopic rel boundary to a surface $S'$ with the following properties: \begin{enumerate}
\item The surface $S'$ intersects $D$ in a single arc. 
\item There are collections of compression disks on each side of $S'$ which are disjoint from $D$ and along which $S'$ compresses to surfaces parallel to $\D_-C_1$ or $\D_-C_2$ respectively. 
\end{enumerate}
\end{lemma}

The proof of Lemma \ref{cglemma} is a straightforward application of the methods used to prove Lemma 1.1 in \cite{cassgord}. See also \cite{bonotal, hakensome,jacolec}.

We will apply Lemma \ref{cglemma} in the manifold obtained by cutting $\overline{S^3 - \eta(K)}$ along $F$. We take the disk $D$ to be the clean alternating product disk associated with $X_1$, illustrated in Figure \ref{push}. See Section 2 of \cite{UGO} for the definition of a clean alternating product disk. Lemma \ref{cglemma} tells us that we may take the arcs $\{\alpha_1,\ldots,\alpha_k\}$ and $\{\beta_1,\ldots,\beta_k\}$ to be disjoint from $D$. This is achieved by taking them to be dual to the collections of compression disks in conclusion (2).
\begin{figure}[h!]  
 \psfrag{D}[cc][Bl]{$D$}
 \psfrag{B}[cc][Bl]{$B$}
 \psfrag{F}[cc][Bl]{$F$}
 \psfrag{K}[cc][Bl]{$K$}
\centering
\includegraphics[width=0.4\textwidth]{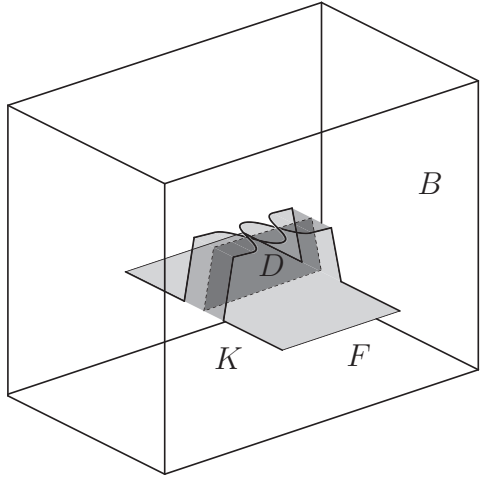}
\caption{A clean alternating product disk} \label{pushw}
\end{figure} 

Once the arcs $\{\alpha_1,\ldots,\alpha_k\}$ and $\{\beta_1,\ldots,\beta_k\}$ are disjoint from $D$ we may push them out of $B$.  The function $f$ then has fibers $S_t:=\overline{f^{-1}(e^{it})}$ that evolve through $S^3$ as follows. We start at time $t=0$ with  $S_0 = F$. As $t$ increases, we begin by allowing the surfaces $S_t$ to evolve by pushing across a disk $D_1$ as shown in Figure \ref{push}. Let $t_1$ be the time when $S_t$ intersects $\D B$ non-transversely in single non-degenerate saddle. Shortly after time $t_1$ we add tubes along the arcs $\{\alpha_1,\ldots,\alpha_k\}$ meaning that we give $f$ a single index-1 critical point for each tube. Continue to sweep through parallel surfaces until time $t=\pi$ when the surface $S_\pi$ intersects $B$ as shown in Figure \ref{push3}. This completes the description of $S_t=\overline{f^{-1}(e^{it})}$ for $t\in[0,\pi]$. 

For $t\in[\pi,2\pi)$, it is easier to visualize what happens when $t$ decreases from $2\pi$. To begin with, as $t$ decreases from $2\pi$, $S_t$ is pushed across a disk $D_2$ as illustrated in Figure \ref{push2}. Let $t_2$ be the  time when $S_t$ intersects $\D B$ non-transversely in a single non-degenerate saddle. Shortly after time $t=t_2$ (that is, at times $t$ slightly smaller than $t_2$) we add tubes  along the arcs $\{\beta_1,\ldots,\beta_k\}$. This means that $f$ has a single index-2 critical point for each arc $\{\beta_1,\ldots,\beta_k\}$. The surface that results is now parallel to $S_\pi$.  The sweepout may continue through parallel surfaces until we reach time $t=\pi$ and a surface $S_\pi$ that agrees with that obtained when we specified $S_t$ for $t\in[0,\pi]$. We have now specified the function $f$ so that the restriction of $f$ to $B$ is exactly as in the proof of Theorem \ref{fiberswitch}.

We may explicitly replace $f$ with a function $f'\colon S^3 - K'\rightarrow{S^1}$ in the exact same fashion as in the proof of Theorem \ref{fiberswitch}. That is, we take $K'$ to agree with $K$ outside $B$ and inside $B$ it as shown in Figure \ref{beta}. Moreover we take $f'=f$ outside $B$ and inside $B$ we define $f'$ to be exactly as $f'$ is defined inside $B$ in the proof of Theorem \ref{fiberswitch}.

For $t\in[0,2\pi)$ put $S'_t=\overline{f'^{-1}(e^{it})}$. Note that $F_{\textrm{untwist}}$ is ambient isotopic to $S'_{t_2-\varepsilon}$. Set $F'=S'_0$ and $S' = S'_\pi$.  Then $(F',S')$ forms a circular Heegaard splitting for  $K'$ satisfying conclusions (1), (2) and (3) in the statement of Theorem \ref{generalswap}. This completes the proof  of Theorem \ref{generalswap}. \end{proof}

A schematic of the surfaces constructed in the proof of theorem \ref{generalswap} is shown in Figure \ref{schematicB}.

\begin{figure}[h!]  
 \psfrag{A}[cc][Bl]{$\beta$}
 \psfrag{B}[cc][Bl]{$B$}
 \psfrag{F}[Bc][Bl]{$F=S_0$}
 \psfrag{K}[cc][Bl]{$K$}
 \psfrag{H}[Bc][Bl]{$F'=S_0'$}
  \psfrag{J}[cc][Bl]{$K'$}
    \psfrag{W}[cr][Br]{$S=S_\pi$}
 \psfrag{S}[Bc][Bl]{$S'=S_\pi'$}
  \psfrag{V}[Bl][Bl]{$S'_{t_1+\varepsilon}$}
  \psfrag{Q}[Bl][Bl]{$S'_{t_2-\varepsilon}$}
  \psfrag{P}[Bl][Bl]{$=F_{\textrm{untwist}}$}

    \psfrag{L}[Bc][Bl]{$S=S_\pi$}
\centering
\includegraphics[width=\textwidth]{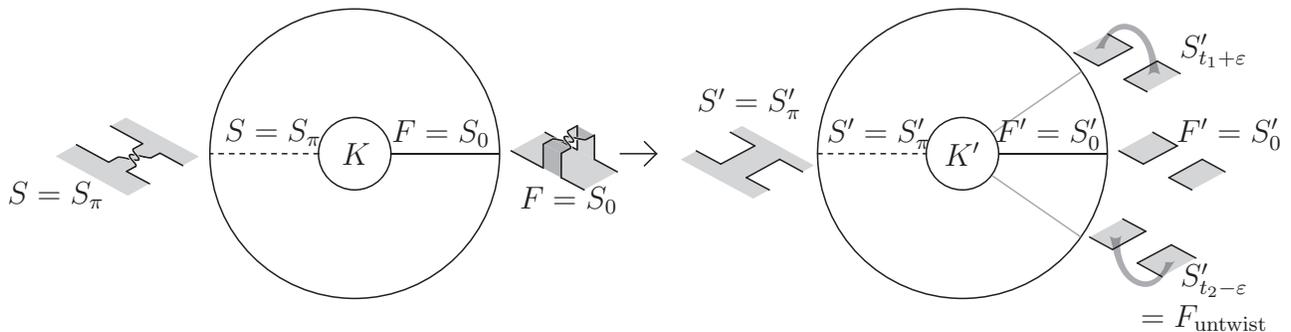}
\caption{Schematic showing the surfaces in Theorem \ref{generalswap}} \label{schematicB}
\end{figure} 

\section{Proof of Theorem \ref{maint}}\label{secmain}

In this section we prove the main theorem of this paper, Theorem \ref{maint}, assuming Theorem \ref{searchy} \cite{algorithmscirc}.

\begin{proof}[Proof of Theorem \ref{maint}] Let $K$ and $K'$ be oriented knots in $S^3$ with the genus of $K$ greater than the genus of $K'$. Suppose that $K$ and $K'$ are both either hyperbolic or fibered. We will describe how to construct the lists of oriented spanning surfaces $\{S_1,\ldots,S_n\}$ for $K$ and $\{S_1',\ldots,S_{n'}'\}$ for $K'$ as stipulated in the statement of Theorem \ref{maint}.

\vspace{8pt}
\textbf{Step 1:} First determine whether $K$ is fibered. This may be achieved using standard results from normal surface theory \cite{mat}. See Algorithm 6 of \cite{tillcoop}. If $K$ is fibered let $\{S_1,\ldots,S_n\}$ be the set containing just a fiber of $K$. This may be found using the algorithm described in the proof of Theorem 4.1.10 of \cite{mat}. We need to keep track of an integer $m$ in the algorithm. In the case that $K$ is fibered put $m=0$.

If $K$ is not fibered (implying that it is hyperbolic), find a list $\{F_1,\ldots,F_n\}$ containing every minimal genus Seifert surface for $K$, up to ambient isotopy of $S^3$ keeping $K$ fixed throughout. This may be achieved using normal surface theory in a suitable ideal triangulation for the knot exterior. See \cite{lacksimple,mat}.

For each minimal genus Seifert surface $F_i$, find a circular Heegaard splitting $(F_i,S_i)$. This may be achieved by cutting along $F_i$ and using a similar procedure to that described in Theorem 3.1.10 of \cite{lectureheeg}. This specifies the list of surfaces $\{S_1,\ldots,S_n\}$. In this case let $m$ be the maximal handle number of the circular Heegaard splittings $(F_i,S_i)$.

\vspace{8pt}
\textbf{Step 2:} Use Theorem \ref{searchy} to find a list $\{(F_1,S_1),\ldots,(F_{n'},S_{n'})\}$ containing every circular Heegaard splitting for $K'$ with thin-genus $g(K)-1$ and handle number at most $m+1$, up to ambient isotopy keeping $K'$ fixed. This specifies the list $\{S_1',\ldots,S_{n'}'\}$.

\vspace{8pt}
It remains to prove that the lists $\{S_1,\ldots,S_n\}$ and $\{S_1',\ldots,S_{n'}'\}$ have the property that if $K$ and $K'$ are related by a single crossing change then some $S_i$ and some $S'_{i'}$ are related by a twist. So suppose that $K$ and $K'$ a related by a single crossing change. Then by Theorem \ref{stplumb}, $K$ has a minimal genus Seifert surface $F$ which may be expressed as the plumbing of two surfaces, $X_1$ and $X_2$, where $X_1$ is a Hopf band, and which has the property that untwisting $X_1$ yields a surface whose boundary is ambient isotopic to $K'$.

If $K$ is fibered, then $F$ must appear on the list $\{S_1,\ldots,S_n\}$. By Theorem \ref{fiberswitch} the surface obtained by untwisting $X_1$ must appear on the list $\{S_1',\ldots,S_{n'}'\}$. This completes the proof of Theorem \ref{maint} in the case that $K$ is fibered, assuming Theorem \ref{searchy}.

If $K$ is not fibered, let $S$ be the surface in the list $\{S_1,\ldots,S_n\}$ that together with $F$ forms a circular Heegaard splitting for $K$. By Theorem \ref{generalswap}, $S$ admits a twist to yield a surface $S'$ that arises as the thick surface in a circular Heegaard splitting for $K'$ with thin-genus $g(K)-1$ and handle number at most $m+1$. Hence $S'$ appears on the list $\{S_1',\ldots,S_{n'}'\}$ after an ambient isotopy of $S^3$. This completes the proof of Theorem \ref{maint} in the case that $K$ is not fibered, assuming Theorem \ref{searchy}.\end{proof}

\section{An illustrative  example: When $K$ and $K'$ are both fibered}

In the case where $K$ and $K'$ are both fibered, the proof of Theorem \ref{maint} relies on a special case of Theorem \ref{searchy} that illustrates some of the methods we explore in more detail in \cite{algorithmscirc}. Remarkably, this case was studied by Kobayashi \cite{koblink} as far back as 1989, although he used no language from the theory of Heegaard splittings, much of which had yet to be developed.  

\begin{theorem}\label{example}Let $K$ be a fibered knot in $S^3$. Then $K$ has a unique handle number one circular Heegaard splitting of every thin-genus at least $g(K)$, up to ambient isotopy of $S^3$ keeping $K$ fixed throughout. Furthermore there is an algorithm to construct this circular Heegaard surface for any given thin-genus. 
\end{theorem}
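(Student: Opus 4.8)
\emph{Proof proposal.} The plan is to reduce everything to the fibration of $K$, exploiting the fact that a fibered knot has a unique fiber surface $\Sigma$ (it is the unique incompressible Seifert surface, and hence also the unique minimal genus one, up to ambient isotopy rel $K$). Throughout I would write $g=g(K)$ and present the exterior as the mapping torus $M=\Sigma\times[0,1]/(x,1)\sim(\phi(x),0)$, where $\phi$ is the monodromy, and pass freely between a circular Heegaard splitting $(F,S)$ and an associated circular Morse function $f\colon M\rightarrow S^1$. Recall from Section \ref{circpre} that handle number one means $C_1$ and $C_2$ each carry a single $1$-handle, so that the thick surface has genus exactly one more than the thin surface, i.e.\ thick-genus $=n+1$ when thin-genus $=n$.

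For existence I would first treat thin-genus $g$. Take the handle number zero circular splitting given by two fibers, $F=\Sigma\times\{0\}$ and $S=\Sigma\times\{1/2\}$, whose compression bodies $C_1=\Sigma\times[0,1/2]$ and $C_2=\Sigma\times[1/2,1]$ are both products, and stabilise it once. This raises the thick-genus to $g+1$ while leaving $F=\Sigma$, producing a handle number one splitting of thin-genus $g$. For thin-genus $n>g$ I would attach $n-g$ trivial (boundary-parallel) tubes to $\Sigma$ and to the thick surface simultaneously, one pair at a time along an arc that is parallel into the thin surface; each such move raises thin- and thick-genus together by one and so preserves the handle number, yielding a handle number one splitting of thin-genus $n$ whose thin surface is $\Sigma$ with $n-g$ standardly embedded trivial handles. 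The theorem asserts these are the only possibilities.

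For uniqueness, given a handle number one circular splitting $(F,S)$ of thin-genus $n$, the key is to show that $F$ must be $\Sigma$ with $n-g$ trivial handles in standard position, after which $S$ is forced. Cutting $M$ along $F$ converts the circular splitting into an ordinary Heegaard splitting of the cut-open manifold $M_F$, whose thick surface is $S$ and whose two negative boundary components are copies of $F$, and whose thick surface is obtained from $F$ by a single stabilising handle on each side. When $n=g$ the thin surface has minimal genus, hence is isotopic to $\Sigma$, so $M_F=\Sigma\times I$ and the statement reduces to the classification of low-genus Heegaard splittings of $\Sigma\times I$: these are all stabilisations of the product splitting by Scharlemann--Thompson \cite{untel}, and there is exactly one of handle number one. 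When $n>g$ the surface $F$ is necessarily compressible; I would locate a compressing disk for $F$, compress along it to drop the thin-genus by one, track the effect on the splitting, and iterate down to the minimal case, using the uniqueness of $\Sigma$ at the bottom to conclude that all of the excess genus of $F$ is carried by standardly isotopable trivial handles and that the remaining stabilising handle yielding $S$ is unique up to isotopy.

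The construction is algorithmic because the fiber $\Sigma$ can be produced algorithmically (as in the fibered case of the proof of Theorem \ref{maint}) and the stabilisation and trivial-tube moves are explicit, so for any target thin-genus $n$ one writes down $F$ and $S$ directly from $\Sigma$. I expect the main obstacle to be precisely the uniqueness argument in the range $n>g$: here the thin surface is compressible and non-minimal, so it is \emph{not} pinned down by the uniqueness of the incompressible Seifert surface, and the real work is to show that its excess genus sits only on standardly embedded trivial handles — ruling out knotted or linked tubes and exotic stabilisations of the thick surface. This is exactly where the circular analogues of the Reidemeister--Singer and Scharlemann--Thompson machinery developed in \cite{algorithmscirc} (and anticipated by Kobayashi \cite{koblink}) are needed.
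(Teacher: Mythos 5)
Your overall skeleton matches the paper's: split into $F$ incompressible (thin-genus $g$) versus compressible (thin-genus $>g$), settle the base case with Scharlemann--Thompson's theorem that Heegaard splittings of $(\textrm{surface})\times I$ are standard (the paper cites \cite{heegstandard} for this, not \cite{untel}), and induct downward on thin-genus. But in the inductive step --- which you yourself flag as the main obstacle --- there is a genuine gap, and the route you sketch would fail. You propose to ``locate a compressing disk for $F$, compress along it, and track the effect on the splitting.'' The problem is that $F$ is the negative boundary of both compression bodies, and the negative boundary of a compression body is incompressible in it; so every compressing disk for $F$ must cross the thick surface $S$, and ``tracking the effect'' of compressing along such a disk is exactly the difficulty that cannot be waved away. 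Deferring that difficulty to the machinery of \cite{algorithmscirc} or \cite{koblink} also defeats the purpose of this section: the paper's proof here is self-contained and is offered precisely to illustrate that the fibered case does not need that machinery.

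The idea you are missing is to compress the thick surface, not the thin one. Since the splitting has handle number one, $S$ has a unique compressing disk $D_i$ in each compression body $C_i$, namely the cocore of the single $1$-handle, and compressing $S$ along $D_i$ automatically produces a surface parallel to $F$. Compressibility of $F$ enters only once: by a slight generalization of Lemma 1.1 of \cite{cassgord}, it forces the splitting to be weakly reducible, i.e.\ $D_1$ and $D_2$ can be isotoped to be disjoint. One can then untelescope: compressing $S$ along both disks gives a surface $S_{D_1,D_2}$ of genus one less, and after collapsing the product regions bounded by $S_{D_1}$, $S_{D_2}$ and $F$, the pair $(S_{D_1,D_2},F)$ is again a handle number one circular splitting in which the old thin surface $F$ has become the new \emph{thick} surface. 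Iterating drops the thin-genus by one each time until the thin surface is incompressible, hence a fiber, where your base-case argument applies; the original splitting is then recovered, algorithmically and uniquely, by amalgamating back up as in \cite{lacksimple}. Note also that your existence construction for $n=g$ (stabilizing the product splitting) agrees with the paper, but your ``simultaneous trivial tubes'' construction for $n>g$ is asserted rather than proved to yield a handle number one circular splitting; in the paper both existence and uniqueness for $n>g$ fall out of the untelescoping/amalgamation argument rather than from a separate explicit construction.
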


\begin{proof}
Let $(F,S)$ be a handle number one circular Heegaard splitting for $K$, decomposing $M=\overline{S^3-\eta(K)}$ into two compression bodies $C_1$ and $C_2$. Since $(F,S)$ is handle number one, $S$ admits a unique (up to ambient isotopy) compression disk $D_1$ (resp. $D_2$) in $C_1$ (resp. $C_2$). 

There are two options regarding $F$: either it is incompressible or it is not. The former case holds precisely when $F$ has thin-genus $g(K)$ and the later case holds precisely when $F$ has thin-genus strictly greater than $g(K)$.

If $F$ is incompressible then it must be a fiber. Thus, after cutting along $F$, the surface $S$ forms a Heegaard splitting for $F \times I$. By the main theorem of \cite{heegstandard}, this must be a once stabilized copy of $F \times \{\frac{1}{2}\}$. (Note that the setting in \cite{heegstandard} was $F \times I$ where $F$ is a closed surface, but the methods presented there readily extend to the case where $F$ is a surface with boundary.) This completes the proof for thin-genus $g(K)$ splittings.

Now suppose that $F$ is compressible. By a slight generalization of Lemma 1.1 of \cite{cassgord} this implies that $D_1$ and $D_2$ may be ambient isotoped to be disjoint. 

We now perform an operation known as `untelescoping' to the circular Heegaard surface $(F,S)$. See \cite{lacksimple,untel}. We replace the surface $S$ with three new surfaces, namely $S_{D_1}$, $S_{D_2}$ and $S_{D_1, D_2}$, where $S_{D_1}$ (resp. $S_{D_2}$) is the surface obtained by compressing $S$ along $D_1$ (resp. $D_2$) and $S_{D_1,D_2}$ is the surface obtained from $S$ by compressing along both $D_1$ and $D_2$. This process is illustrated in Figure \ref{simpuntel}.

\begin{figure}[h!]  
 \psfrag{H}[cc][Bl]{$D_2$}
  \psfrag{X}[cc][Bl]{$S_{D_1,D_2}$}
 \psfrag{C}[cc][Bl]{$C_1$}
 \psfrag{F}[Bc][Bl]{$F$}
 \psfrag{D}[cc][Bl]{$C_2$}
  \psfrag{J}[cc][Bl]{$K'$}
    \psfrag{W}[cc][Bl]{$S_{D_2}$}
 \psfrag{S}[Bc][Bl]{$S$}
  \psfrag{V}[Bl][Bl]{$S'_{t_1+\varepsilon}$}
  \psfrag{Q}[cc][Bl]{$S_{D_1}$}
  \psfrag{P}[Bl][Bl]{$=F_{\textrm{untwist}}$}
 \psfrag{G}[cc][Bl]{$D_1$}
 \psfrag{U}[cc][Bl]{$\textrm{Untelescope}$}
    \psfrag{L}[Bc][Bl]{$S=S_\pi$}
\centering
\includegraphics[width=0.8\textwidth]{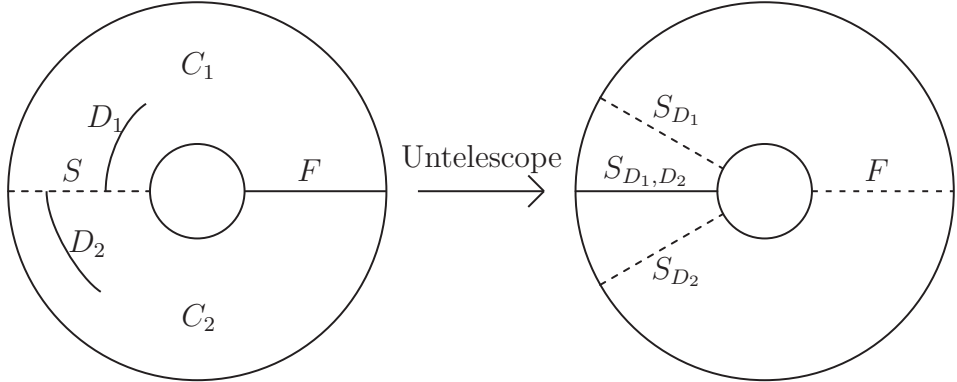}
\caption{Untelescoping a handle number one circular Heegaard splitting} \label{simpuntel}
\end{figure} 

Now, $S_{D_1}$ and $S_{D_2}$ are both parallel to $F$. Thus we may collapse the entire product region between $S_{D_1}$ and $S_{D_2}$ to a single surface $F$. Thus $(S_{D_1,D_2}, F)$ forms a new handle number one circular Heegaard splitting for $K$, illustrated in Figure \ref{simpuntel2}.  

\begin{figure}[h!]  
 \psfrag{H}[cc][Bl]{$D_2$}
  \psfrag{X}[cc][Bl]{$S_{D_1,D_2}$}
 \psfrag{C}[cc][Bl]{$C_1$}
 \psfrag{F}[Bc][Bl]{$F = S_{D_1}$}
 \psfrag{D}[cc][Bl]{$C_2$}
  \psfrag{J}[cc][Bl]{$K'$}
    \psfrag{W}[cc][Bl]{$S_{D_2}$}
 \psfrag{S}[Bc][Bl]{$S$}
  \psfrag{V}[Bl][Bl]{$S'_{t_1+\varepsilon}$}
  \psfrag{Q}[cc][Bl]{$S_{D_1}$}
  \psfrag{P}[Bl][Bl]{$=F_{\textrm{untwist}}$}
 \psfrag{G}[cc][Bl]{$=S_{D_2}$}
 \psfrag{U}[cc][Bl]{$\textrm{Untelescope}$}
    \psfrag{L}[Bc][Bl]{$S=S_\pi$}
\centering
\includegraphics[width=0.3\textwidth]{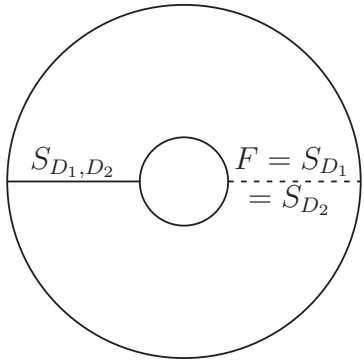}
\caption{The new handle number one circular Heegaard splitting $(S_{D_1,D_2}, F)$} \label{simpuntel2}
\end{figure} 

Notice how the role of $F$ changes under untelescoping: In $(F,S)$ it was the thin surface whereas in $(S_{D_1,D_2}, F)$ it is the thick surface. 

We now repeat this procedure. It must terminate because the thin-genus decreases by one each time, so eventually we arrive at a handle number one circular Heegaard splitting with incompressible thin surface. We have already shown that in this eventuality the thin surface must be a fiber and the thick surface must be a stabilized fiber. We may now recreate the circular Heegaard splitting $(F,S)$ we started with by performing the reverse procedure to untelescoping, namely amalgamation. This may be performed in an algorithmic fashion, using the same techniques as described by Lackenby in \cite{lacksimple}, until we reach a handle number one circular Heegaard splitting of the desired thin-genus. This concludes the proof of Theorem \ref{example}.\end{proof}

\bibliography{untwist}
\bibliographystyle{amsplain}

\end{document}